\documentclass[11pt]{article}

\usepackage{graphicx}
\usepackage{url}
\usepackage{amsmath}
\usepackage{amsfonts}
\usepackage{verbatim}
\usepackage{amssymb}
\usepackage{amsthm}
\usepackage{color}
\usepackage{mathtools}
\usepackage[affil-it]{authblk}
\usepackage{tikz-cd}

\setlength{\textheight}{9 in} \setlength{\textwidth}{6.5 in}
\voffset = -25mm \hoffset = -20 mm \pagestyle{plain}

\newtheorem{thm}{Theorem}
\newtheorem{defn}[thm]{Definition}
\newtheorem{prop}[thm]{Proposition}
\newtheorem{cor}[thm]{Corollary}
\newtheorem{lem}[thm]{Lemma}
\newtheorem{remark}[thm]{Remark}

\newtheorem{ex}[thm]{Example}

\def\mcP{{\mathcal{P}}}

\def\g{{\gamma}}
\def\a{{\alpha}}
\def\be{{\beta}}

\newcommand{\bit}{\begin{itemize}}
\newcommand{\eit}{\end{itemize}}
\newcommand{\ben}{\begin{enumerate}}
\newcommand{\een}{\end{enumerate}}
\newcommand{\beq}{\begin{equation}}
\newcommand{\eeq}{\end{equation}}
\newcommand{\bea}{\begin{eqnarray*}}
\newcommand{\eea}{\end{eqnarray*}}
\newcommand{\bpf}{\begin{proof}}
\newcommand{\epf}{\end{proof}\ms}
\newcommand{\ms}{\medskip}


\title{($\varepsilon,T$)-Chains and Chain Recurrence in Graph Determined Hybrid Dynamical Systems }
\author{Kimberly Ayers}
\affil{Pomona College}

\date{}

\begin{document}
\maketitle

\begin{abstract}
This paper examines a continuous time dynamical system that is an extension of a discrete time dynamical system previously examined, and considers this system together in a product space with a compact subset of Euclidean space. Together, the two systems give a skew product flow.  We first examine limit behavior and recurrence in our continuous time extension.  We then consider analogous limit and recurrence concepts for a skew product flow, and the behavior on the Euclidean space that results. 
\end{abstract}

\section{Introduction}
\vspace{2mm}

\indent In this paper, we investigate the concept of a finite number of different flows on the same compact metric space, $M$.  $M$ is often taken to be a subset of $\mathbb{R}^n$ endowed with the usual Euclidean metric, but this is not a necessary requirement - indeed, any compact metric space will do.  We examine the behavior of the system if we transition through the different systems at regular intervals over time.  Additionally, we require that the ``switching" between systems follow rules given by a (not necessarily symmetric) adjacency matrix, or equivalently, a directed graph.  Because the different flows on $M$ are not necessarily related in any way, the limit behavior of this system is not obvious.  We attempted to reconcile this problem of ``combining" limit behavior - what happens when a space has multiple different dynamical systems, each with distinct limit sets, acting on it?  How can limit and recurrence - and in particular, chain recurrence - be studied in this context? We begin by demonstrating that $M$ paired with a function space denoted by $\Delta$ together form a skew-product flow, allowing for the examination of certain limiting behavior and recurrence concepts. \\
\indent In \cite{discretesystems}, we studied a discrete dynamical system on a space $\Omega$ which consists of all bi-infinite paths on a directed graph $G$, endowed with a metric.   This space paired with a flow $\varphi$ given by the left shift-mapping (see \cite{Katok}, pp.48) form a discrete dynamical system.  It can then be shown that there exists a finest Morse Decomposition that is nicely correlated with $G$'s structure, and that the Morse sets of this finest decomposition are either a single periodic orbit or chaotic sets.  This system is a generalization of the behavior seen in Smale's horseshoe (see 
\cite{Robinson}, pp. 275-280).  It is this system $(\Omega, \varphi)$ that we extend to a continuous dynamical system below to line up with the behavior in $M$ to form a skew-product flow.\\
\indent We begin by introducing the function space $\Delta$, our continuous extension of $\Omega$.  We demonstrate that $\Delta$ contains discretization that is topologically conjugate to $\Omega$.  Thus, $\Delta$ inherits many properties from $\Omega$, including an analogous finest Morse Decomposition, and similar chaotic behavior within Morse sets.  We then examine the skew-product flow on $M\times\Delta$, as well as behavior on $M$ considered alone.  It is important to note that $M$ paired with its behavior do not constitute a dynamical systems - the flow property is not satisfied.  Thus, we can not apply many important theorems or definitions to this space.  We thus define some analogous chain recurrence concepts, and explore what the analogous limiting behavior and recurrent sets are in this context.

\section{Continuous Extension of $\Omega$}

Note that in {\rm \cite{discretesystems}}, the flow on $\Omega$ is a discrete time dynamical system.  However, in order to insert this behavior into another dynamical system to create a hybrid system, it is necessary to extend this system to a continuous time dynamical system.  The obvious extension of a sequence into a function on $\mathbb{R}$ is a piecewise constant function.

\begin{defn}\label{delta}
Let 
\begin{center}
$\bar{\Delta} =  \left\{ x : \mathbb{R} \rightarrow V \left|
\begin{array}{cc}
\{x(ih)\}_{i \in \mathbb{Z}} \in \Omega \\
x$ is piecewise constant on $[nh,(n+1)h) \, \forall \, n \in \mathbb{Z}
\end{array}  \right\} \right.$
\end{center}
and 
$$\Delta = \{ x(\cdot + t) | x \in \Delta, t \in \mathbb{R}\} .$$
\end{defn}

\noindent
In other words, $\Delta$ is the sets of functions that results from translating the functions in $\bar\Delta$ by all $t \in \mathbb{R}$.  We allow for all horizontal translations of functions in $\bar\Delta$ in order for the space to be closed under shifts by $t$ for all $t\in \mathbb{R}$.

The next definition adapts the shift operator to continuous time by taking functions in $\bar \Delta$ as a generalization of bi-infinite sequences in $\Omega$.

\begin{defn}\label{psi}
Let 

\begin{center}
$
\begin{array}{lr}
 \psi: \mathbb{R} \times  \Delta  \rightarrow  \Delta \\
 \,\,\,\,\,\,\,\,(t, x(\cdot)) \mapsto x(\cdot + t)
\end{array}$
\end{center}
\end{defn}

Note that $\psi$ satisfies the flow property:
$$\psi(s+t, x(k)) = x(k + s + t) = x((k + t) + s) = \psi(s, x(k + t)) = \psi(s, \psi(t, x(k))) .$$

We now use the following to establish a metric on the set of functions $\Delta$.

\begin{defn}\label{bardeltafunction}
We take the function $f$ to be defined as follows:
\begin{center}
$\begin{array}{lc}
f: \Delta \times \Delta \times \mathbb{Z} \rightarrow \mathbb{R} \\ 
\,\,\,\,(x, y, i) \mapsto \frac{1}{h}\displaystyle\int_{ih}^{(i+1)h}{\delta(x,y,t)dt}
\end{array}$
\end{center}
where 
$$ \delta(x,y,t) =  \left \{ \begin{array}{cc}
1 & \mbox{if } x(t) \neq y(t)\\
0 & \mbox{if } x(t) = y(t)
\end{array} \right.$$
\end{defn}

\begin{thm}\label{bardeltafunctionmetric}
The function
\begin{center}
$
\begin{array}{lc}
 d: \Delta \times \Delta \rightarrow \mathbb{R} \\
 \,\,\,\,\,\,\,\,\,(x,y) \mapsto \displaystyle \sum_{i = - \infty}^{\infty}\frac{f(x,y,i)}{4^{|i|}}
\end{array} $
\end{center}
where $f(x,y,i)$ is defined as in Definition \ref{bardeltafunction}, is a metric on $\bar{\Delta}$.
\end{thm}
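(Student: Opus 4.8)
The plan is to check, in order, that the series defining $d$ converges (so that $d$ is well defined), and then the three metric axioms: definiteness, symmetry, and the triangle inequality. I begin with well-definedness and nonnegativity. For fixed $x,y\in\bar\Delta$, the map $t\mapsto\delta(x,y,t)$ is the indicator of $\{t:x(t)\ne y(t)\}$, which meets each block $[ih,(i+1)h)$ in a finite union of subintervals because $x$ and $y$ are piecewise constant there; hence it is (Riemann) integrable and $0\le f(x,y,i)=\frac1h\int_{ih}^{(i+1)h}\delta(x,y,t)\,dt\le 1$. Therefore
\[
0\ \le\ d(x,y)=\sum_{i=-\infty}^{\infty}\frac{f(x,y,i)}{4^{|i|}}\ \le\ \sum_{i=-\infty}^{\infty}\frac{1}{4^{|i|}}=\frac53<\infty,
\]
so $d$ is a well-defined, nonnegative, real-valued function on $\bar\Delta\times\bar\Delta$, and every manipulation of these absolutely convergent series below is legitimate.

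Symmetry is immediate since $\delta(x,y,t)=\delta(y,x,t)$ for all $t$, whence $f(x,y,i)=f(y,x,i)$ and $d(x,y)=d(y,x)$. For definiteness, if $x=y$ then $\delta(x,y,\cdot)\equiv 0$ and $d(x,y)=0$. Conversely, if $d(x,y)=0$ then, all summands being nonnegative, $f(x,y,i)=0$ for every $i$, i.e.\ $\int_{ih}^{(i+1)h}\delta(x,y,t)\,dt=0$; since $\delta\ge 0$ this gives $\delta(x,y,t)=0$ for almost every $t$ in each block, hence $x(t)=y(t)$ for almost every $t\in\mathbb R$. To upgrade this to equality everywhere I would use the structure of $\bar\Delta$: its members are constant on each block $[nh,(n+1)h)$ (equivalently, $x\mapsto\{x(ih)\}$ identifies $\bar\Delta$ with $\Omega$), so if $x$ and $y$ differed at a single point they would differ on a whole block, a set of positive measure; thus $x=y$.

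The substantive step is the triangle inequality, and the key is the pointwise ``Hamming-type'' estimate
\[
\delta(x,z,t)\ \le\ \delta(x,y,t)+\delta(y,z,t)\qquad\text{for every }t\in\mathbb R,
\]
which holds by cases: if $x(t)=z(t)$ the left side is $0$; if $x(t)\ne z(t)$ then $y(t)$ cannot equal both $x(t)$ and $z(t)$, so the right side is at least $1$. Integrating over $[ih,(i+1)h)$ and dividing by $h$ gives $f(x,z,i)\le f(x,y,i)+f(y,z,i)$ for all $i$, and multiplying by $4^{-|i|}\ge 0$ and summing yields $d(x,z)\le d(x,y)+d(y,z)$.

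I do not expect a serious obstacle: this is a routine verification, the content being that ``the weighted average fraction of time two step functions disagree'' inherits the metric axioms from the discrete Hamming-type distance on values and from the summability of the weights $4^{-|i|}$. The only point that genuinely needs care is definiteness: the vanishing of $d$ only yields agreement almost everywhere, and one must appeal to the piecewise-constant (indeed block-constant) normalization of elements of $\bar\Delta$ to conclude pointwise agreement; without such a normalization $d$ would be merely a pseudometric.
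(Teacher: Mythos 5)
Your proof is correct and follows essentially the same route as the paper's: symmetry from the symmetry of $\delta$, definiteness from the nonnegativity of the summands, and the triangle inequality from the pointwise bound $\delta(x,z,t)\le\delta(x,y,t)+\delta(y,z,t)$ pushed through the integral and the weighted sum. You are somewhat more careful than the paper on two minor points — verifying convergence of the series and spelling out why almost-everywhere agreement upgrades to pointwise agreement via the block-constant structure — but these are refinements of the same argument, not a different approach.
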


\begin{proof}
\ \\
\begin{enumerate}

\item (Non-negativity) $f(x, x, i) = 0$, for all $i \in \mathbb{Z}$.  Therefore, $d(x,x) = 0$.  For $x \neq y$, $f(x,y,i) \neq 0$ for at least one $i \in \mathbb{Z}$, and $f(x, y, i) \geq 0$ for all $i \in \mathbb{Z}$.  Therefore, $d(x,y) > 0$ for all $x \neq y$.
\item (Symmetry) Clearly, $\delta(x,y,t) = \delta(y,x,t)$ for all $t \in \mathbb{R}, x, y \in \Delta$.  So, $f(x,y,i) = f(y,x,i)$, $d(x,y) = d(y,x)$.
\item (Triangle inequality) Choose $x,y,z\in\Delta$.  If $x=z$, then as $d(x,z)=0$ and $d$ is nonnegative, then clearly for all $y\in\Delta$, $d(x,z)\leq d(x,y)+d(y,z)$.   

If $x\not =z$, then there exists $t\in\mathbb{R}$ such that $x(t)\not = z(t)$.  If $x(t) \neq z(t)$, then either $x(t) \neq y(t)$ or $y(t) \neq z(t)$.  Therefore, $\delta(x,z,t) = 1$ implies that $\delta(x,y,t) = 1$ and/or $\delta(y,z,t) = 1$, so $f(x,z,i) \leq f(x,y,i) + f(y,z,i)$.  Since $d$ is a linear combination of $f$'s, $d(x,z) \leq d(x,y) + d(y,z)$.
\end{enumerate}
\end{proof}
Because we would like to consider $\Delta$ as essentially the continuous version of $\Omega$, it helps to establish a relationship between them.  We show below that the spaces $\Omega$ and $\bar{\Delta}$ (the version that does not contain all real time shifts of the functions) are isometrically isomorphic, and are in fact topologically conjugate as well.

\begin{prop} \label{isoiso}
The mapping 
$\sigma: \Omega \rightarrow \bar\Delta$ where $x\mapsto x(t)$ where $x(i)=x_i$ for all $i\in\mathbb{Z}$ is an isometric isomorphism (bijection).
\end{prop}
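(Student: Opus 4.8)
The plan is to make the map $\sigma$ explicit, check that its image lies in $\bar\Delta$, exhibit a two-sided inverse, and then verify distance preservation coordinate by coordinate. First I would write $\sigma$ concretely: for a path $x = (x_i)_{i \in \mathbb{Z}} \in \Omega$, let $\sigma(x)$ be the function $\mathbb{R} \to V$ that takes the constant value $x_n$ on each half-open block $[nh,(n+1)h)$. Then $\sigma(x)$ is constant, hence (piecewise) constant, on every $[nh,(n+1)h)$, and sampling on the grid recovers $\{\sigma(x)(ih)\}_{i\in\mathbb{Z}} = (x_i)_{i\in\mathbb{Z}} = x \in \Omega$. Both defining conditions of Definition \ref{delta} therefore hold, so $\sigma(x) \in \bar\Delta$ and $\sigma$ is well defined.

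Next I would prove bijectivity by producing the inverse directly. Define $\tau : \bar\Delta \to \Omega$ by $\tau(y) = \{y(ih)\}_{i\in\mathbb{Z}}$; this is a point of $\Omega$ precisely by the first clause of Definition \ref{delta}. For $y \in \bar\Delta$, constancy on each block gives $y(t) = y(nh)$ whenever $t \in [nh,(n+1)h)$, so $\sigma(\tau(y)) = y$; and $\tau(\sigma(x)) = x$ is immediate from the previous paragraph. Hence $\tau = \sigma^{-1}$ and $\sigma$ is a bijection. (Injectivity alone also drops out at once: $\sigma(x) = \sigma(x')$ forces $x_n = \sigma(x)(nh) = \sigma(x')(nh) = x'_n$ for every $n \in \mathbb{Z}$.)

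For the isometry I would fix $x, x' \in \Omega$ and $i \in \mathbb{Z}$ and compute $f(\sigma(x), \sigma(x'), i)$ from Definition \ref{bardeltafunction}. On the whole interval $[ih,(i+1)h)$ we have $\sigma(x)(t) = x_i$ and $\sigma(x')(t) = x'_i$, so the integrand $\delta(\sigma(x), \sigma(x'), t)$ is identically $1$ there if $x_i \neq x'_i$ and identically $0$ there if $x_i = x'_i$ (the single point $(i+1)h$ has measure zero and contributes nothing). Hence $f(\sigma(x), \sigma(x'), i)$ equals $1$ when $x_i \neq x'_i$ and $0$ otherwise, which is exactly the $i$-th summand of the path metric on $\Omega$ from \cite{discretesystems}. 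Summing against the weights $4^{-|i|}$ then gives $d(\sigma(x), \sigma(x')) = d_\Omega(x, x')$, where $d$ is the metric of Theorem \ref{bardeltafunctionmetric}.

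I do not anticipate a genuine obstacle; the two points needing care are (i) matching the metric on $\Omega$ recalled from \cite{discretesystems} to the coordinate form $\sum_i 4^{-|i|}(\cdot)$, so that the weights and the $1/h$ normalization of Definition \ref{bardeltafunction} cancel as claimed, and (ii) the harmless measure-zero mismatch at the right endpoints $(i+1)h$ (and, if one insists on reading ``piecewise constant on $[nh,(n+1)h)$'' literally, pinning down that $\bar\Delta$ really means constant on each such block, which is what surjectivity of $\sigma$ requires). It is also worth recording, as a bridge to the topological-conjugacy statement preceding the proposition, that $\sigma$ intertwines the left shift on $\Omega$ with the time-$h$ map $\psi(h,\cdot)$ of Definition \ref{psi}, since advancing the sequence by one index is the same as translating the corresponding step function by $h$.
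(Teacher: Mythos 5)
Your proposal is correct and follows essentially the same route as the paper's proof: it verifies the isometry by showing that $f(\sigma(x),\sigma(x'),i)$ reduces to the indicator of $x_i\neq x'_i$ on each block and then matching the weighted sums, exactly as the paper does. The only difference is that you spell out bijectivity via an explicit inverse $\tau(y)=\{y(ih)\}_{i\in\mathbb{Z}}$, whereas the paper simply declares it clear from the construction of $\bar\Delta$; this is a welcome but inessential elaboration.
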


\begin{proof}
By the construction of $\bar\Delta$, $\sigma$ is clearly bijective.\\
To show that $\sigma$ is an isometry, it suffices to show that $f(x,y,i) = \bar{f}(x_i,y_i)$, where
$$ \bar{f}(x_i,y_i) =   
\left\{ \begin{array}{cc}
1 & \mbox{if } x(i) \neq y(i)\\
0 & \mbox{if } x(i) = y(i)
\end{array} \right.$$
since the bi-infinite sums for $d$ and $\bar{d}$ are identical.  Note that
$$f(x,y,i) = \frac{1}{h}\int_{ih}^{(i+1)h}{dt} = 1 = \bar{f}(x_i, y_i)$$
for $x(i) \neq y(i)$.
$$ f(x,y,i) = \frac{1}{h}\int_{ih}^{(i+1)h}{0 \cdot dt} = 0 = \bar{f}(x_i, y_i)$$
for $x(i) = y(i)$.
So indeed, $f(x,y,i) = \bar{f}(x_i,y_i)$, and $d(x,y) = \bar{d}(\{x_i\},\{y_i\})$.

\end{proof}

Before we show the topological conjugacy of $\Omega$ and $\bar\Delta$, we first introduce the concept.
\begin{defn}\label{topconj}
A flow $\varphi$ on a space $X$ is said to be topologically conjugate to a flow $\psi$ on a space $Y$ if there is a homeomorphism $h:X\rightarrow Y$ such that $\psi(h(x),t)=h(\phi(x,t)).$
\end{defn}
Systems that are topologically conjugate can be shown to exhibit the same properties.  For instance, given the dynamics on $\Omega$, we then are already familiar with the dynamics on $\bar\Delta$.  Thus, these two systems are essentially equivalent. It is for this reason that we demonstrate this now.

\begin{thm}
$(\Omega,\Phi)$ and $(\bar{\Delta},\psi|_{\mathbb{Z}h})$ are topologically conjugate.
\end{thm}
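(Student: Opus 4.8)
The plan is to use the map $\sigma:\Omega\to\bar\Delta$ furnished by Proposition \ref{isoiso} as the conjugating homeomorphism and simply verify that it intertwines the two flows. First I would observe that $\sigma$ is already known to be a bijection that is an isometry between $(\Omega,\bar d)$ and $(\bar\Delta,d)$; an isometric bijection is automatically a homeomorphism, so both $\sigma$ and $\sigma^{-1}$ are continuous and the topological half of Definition \ref{topconj} is in hand. What remains is the algebraic identity $\sigma(\Phi(n,x))=\psi(nh,\sigma(x))$ for every $x\in\Omega$ and every $n\in\mathbb{Z}$.

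Before doing that I would record that $\psi|_{\mathbb{Z}h}$ really is a discrete-time flow on $\bar\Delta$ rather than merely on $\Delta$: if $x\in\bar\Delta$ and $y=\psi(h,x)=x(\cdot+h)$, then $y(ih)=x((i+1)h)$, so $\{y(ih)\}_{i\in\mathbb{Z}}$ is just the left shift of $\{x(ih)\}_{i\in\mathbb{Z}}\in\Omega$, which again lies in $\Omega$ because $\Omega$ is shift-invariant; and $y$ is still constant on each $[nh,(n+1)h)$. Iterating, $\psi(nh,\cdot)$ carries $\bar\Delta$ into itself for all $n\in\mathbb{Z}$, and the flow property of $\psi$ verified after Definition \ref{psi} restricts to times in $\mathbb{Z}h$. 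Hence $(\bar\Delta,\psi|_{\mathbb{Z}h})$ is a genuine discrete dynamical system.

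For the intertwining relation, fix $x=\{x_i\}_{i\in\mathbb{Z}}\in\Omega$ and $t\in\mathbb{R}$, and write $m=\lfloor t/h\rfloor$, so that by the construction of $\bar\Delta$ we have $\sigma(x)(t)=x_m$. Then $\psi(nh,\sigma(x))(t)=\sigma(x)(t+nh)=x_{m+n}$, since $\lfloor (t+nh)/h\rfloor=m+n$; on the other hand $\sigma(\Phi(n,x))(t)=(\Phi(n,x))_m=x_{m+n}$, because $\Phi$ is the left shift on $\Omega$. Thus the two functions agree at every $t$, i.e. $\psi(nh,\sigma(x))=\sigma(\Phi(n,x))$, which is precisely the conjugacy equation. (One may instead check this only for the generator $n=1$ and then invoke the group structure of the two actions.)

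I do not expect a real obstacle here; the substantive work is all in Proposition \ref{isoiso}, and the rest is bookkeeping. The two points that merit care are (i) confirming that restricting $\psi$ to times in $\mathbb{Z}h$ preserves the grid-aligned subspace $\bar\Delta$ and not just the full $\Delta$, and (ii) keeping the floor-function indexing consistent, so that a forward time translation of a function corresponds to a left shift of the underlying sequence rather than a right shift.
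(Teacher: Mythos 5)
Your proposal is correct and follows essentially the same route as the paper: take the isometric bijection $\sigma$ from Proposition \ref{isoiso} as the conjugating homeomorphism and verify the intertwining identity $\sigma(\Phi(n,x))=\psi(nh,\sigma(x))$ by comparing values of the translated function with the shifted sequence. Your two additional checks --- that $\psi$ restricted to times in $\mathbb{Z}h$ actually maps $\bar\Delta$ into itself, and that the functions agree at every real $t$ rather than only at the grid points $ih$ --- are small but worthwhile refinements of the paper's argument, which verifies the identity only at the lattice points.
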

\bpf
Let $\sigma:\Omega\rightarrow\bar\Delta$ be defined as in Proposition \ref{isoiso}.  Because $\sigma$ has been shown to be a bijective isometry, it is automatically a homeomorphism.  We thus claim that $\sigma$ is the homeomorphism that satisfies the definition of topologically conjugate given by Definition \ref{topconj}; that is, 
$$\psi(\sigma(x),nh)=\sigma(\Phi(x,nh))$$
for all $n\in\mathbb{Z}.$ Let $i\in\mathbb{Z}$; if we can show that $\sigma(\Phi(x,n))(ih)=\psi(\sigma(x),nh)_{ih}$, then the proof is complete.  Notice that the $i$th component of $\Phi(x,n)$ is $x_{i+n}$.  Thus, $\sigma(\Phi(x,n))(ih)=x_{i+n}$.  Similarly, $\sigma(x)((i+n))=x_{i+n}$, and thus $\psi(\sigma(x),nh)_{ih}.$.  Therefore, the result holds, and $(\Omega,\Phi)$ and $(\bar{\Delta},\psi|_{\mathbb{Z}h})$ are topologically conjugate.
\epf

Because of the topological conjugacy, $\bar\Delta$ inherits a lot of properties from $\Omega$, many of which we go through here.  Notice that many of the proofs are very similar, and many rely on $\Omega$ having the same properties.  However, we can not rely on topological conjugacy alone because the space that we are really interested in is $\Delta$, not $\bar\Delta$, which is a much bigger space containing $\Delta$ as a framework, but is not topologically conjugate to $\Omega$.  For instance, below we discuss the continuity of the shift.

\begin{lem}\label{psicontinuous}
$\psi_t$ is continuous for all $t \in \mathbb{R}$.
\end{lem}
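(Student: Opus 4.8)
The plan is to show something stronger than asked, namely that for each fixed $t \in \mathbb{R}$ the map $\psi_t = \psi(t,\cdot) : \Delta \to \Delta$ is Lipschitz with respect to the metric $d$ of Theorem \ref{bardeltafunctionmetric}, with a Lipschitz constant depending only on $t$ and $h$; continuity of $\psi_t$ is then immediate. Note that this must be argued directly on $\Delta$: the topological conjugacy with $\Omega$ only addresses $\bar\Delta$, and $\Delta$ is genuinely larger.

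First I would unwind the definitions. Since $(\psi_t x)(s) = x(s+t)$, we have $\delta(\psi_t x, \psi_t y, s) = \delta(x,y,s+t)$, so after the substitution $u = s+t$,
$$d(\psi_t x, \psi_t y) = \sum_{i \in \mathbb{Z}} \frac{1}{4^{|i|}}\cdot\frac{1}{h}\int_{ih+t}^{(i+1)h+t}\delta(x,y,u)\,du .$$
Thus applying $\psi_t$ leaves the total measure of the disagreement set $\{u : x(u)\neq y(u)\}$ untouched; it merely re-partitions $\mathbb{R}$ into the shifted blocks $I_i^t := [ih+t,(i+1)h+t)$ and reassigns the weights $4^{-|i|}$. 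The key elementary observation is that each $I_i^t$ meets at most two of the original blocks $I_j^0 = [jh,(j+1)h)$, and that whenever $I_i^t \cap I_j^0 \neq \emptyset$ one has $|i-j| \le \lceil |t|/h\rceil + 1$, hence, by the reverse triangle inequality, $4^{-|i|} \le 4^{|i-j|}4^{-|j|} \le 4^{\lceil |t|/h\rceil + 1}\,4^{-|j|}$.

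Then I would break each integral $\int_{I_i^t}$ over the original blocks and interchange the order of summation (justified by Tonelli, as all terms are nonnegative and the whole sum is bounded by $\sum_i 4^{-|i|} = 5/3$), grouping contributions by the original block $I_j^0$. Since the blocks $\{I_i^t\}_{i\in\mathbb{Z}}$ partition $\mathbb{R}$, the integrals over $I_i^t \cap I_j^0$ sum over $i$ to exactly $\int_{I_j^0}\delta(x,y,u)\,du = h\,f(x,y,j)$; bounding the weight $4^{-|i|}$ occurring there by $4^{\lceil|t|/h\rceil+1}4^{-|j|}$ gives
$$d(\psi_t x, \psi_t y) \le 4^{\lceil |t|/h\rceil+1}\sum_{j\in\mathbb{Z}}\frac{f(x,y,j)}{4^{|j|}} = 4^{\lceil |t|/h\rceil+1}\, d(x,y) .$$
Given $\varepsilon > 0$, choosing $d(x,y) < \varepsilon\,4^{-\lceil|t|/h\rceil-1}$ then forces $d(\psi_t x,\psi_t y) < \varepsilon$, proving continuity.

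The only routine points are that $\delta(x,y,\cdot)$ is measurable so the integrals make sense — true since each $x \in \Delta$ is piecewise constant on a shifted $h$-grid with values in the finite set $V$, so the disagreement set is a countable union of intervals — and the legitimacy of the series rearrangement. I expect the main obstacle to be the index bookkeeping: correctly determining which $I_j^0$ a given $I_i^t$ overlaps, verifying the bound $|i-j| \le \lceil|t|/h\rceil+1$ on overlapping indices, and confirming that after the interchange each $I_j^0$ is covered exactly once by the pieces $I_i^t \cap I_j^0$. This accounting — not any analytic difficulty — is also precisely why the estimate yields a $t$-dependent (but $x,y$-independent) constant rather than a uniform one.
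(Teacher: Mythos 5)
Your proof is correct and is essentially the paper's argument: both rewrite $d(\psi_t x,\psi_t y)$ as a reweighting of the same disagreement set and bound the ratio of the shifted weight to the original weight by a constant of the form $4^{O(\lceil |t|/h\rceil)}$, yielding a $t$-dependent Lipschitz bound. The only difference is presentational — the paper expresses $d$ as a single integral with weight $4^{-|\lfloor t'/h\rfloor|}$ and bounds that weight pointwise to get the constant $4^{\lceil|t|/h\rceil}$, while your block-overlap bookkeeping gives the slightly looser (but equally sufficient) constant $4^{\lceil|t|/h\rceil+1}$.
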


\begin{proof}
Given $x, y \in \Delta$, we need to show that for all $\epsilon > 0$, there exists $\delta > 0$ such that
$$d(x,y) < \delta \Rightarrow d(\psi_t(x),\psi_t(y))<\epsilon.$$
Given any $\epsilon >0$, take $\delta = \frac{\epsilon}{4^n}$, where $n = \left\lceil|\frac{t}{h}|\right\rceil$, the least integer greater than the absolute value of $\frac{t}{h}$.  It is useful to rewrite $d(x,y)$ in the form
$$ d(x,y) = \frac{1}{h} \int_{-\infty}^{\infty}{\frac{1}{4^{\left |\lfloor \frac{t'}{h}\rfloor \right |}}\delta(x,y,t')dt'}$$
where $\delta$ is as defined above.  Given this, we can write
$$ d(\psi_t(x),\psi_t(y)) = \frac{1}{h} \int_{-\infty}^{\infty}{\frac{1}{4^{\left | \lfloor \frac{(t+t')}{h}  \rfloor \right |}}\delta(x,y,t')dt'}$$
And,
$$ \frac{1}{4^{\left | \lfloor \frac{(t+t')}{h} \rfloor \right |}} \leq 4^{\left \lceil |\frac{t}{h}| \right \rceil} \frac{1}{4^{\left | \lfloor \frac{t'}{h} \rfloor \right|}}$$

So, $$ d(\psi_t(x),\psi_t(y)) = \frac{1}{h} \int_{-\infty}^{\infty}{\frac{1}{4^{\left |\lfloor \frac{(t+t')}{h}  \rfloor \right |}}\delta(x,y,t')dt'}$$
$$ \leq 4^{\left \lceil |\frac{t}{h}| \right \rceil} \frac{1}{h} \int_{-\infty}^{\infty}{\frac{1}{4^{\left |\lfloor \frac{t'}{h} \rfloor \right|}}\delta(x,y,t')dt'}$$
$$ = 4^{\left \lceil |\frac{t}{h}| \right \rceil} d(x,y)$$
$$ < 4^{\left \lceil |\frac{t}{h}| \right \rceil} \delta$$
$$ = \epsilon$$
\end{proof}

Similarly to $\Omega$, we have compactness of $\Delta$. (We already know $\bar\Delta$ is compact since is it homeomorphic to $\Omega$.)

\begin{lem}\label{deltabarcompact}
 $\Delta$ is compact.
\end{lem}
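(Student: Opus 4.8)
The plan is to realize $\Delta$ as the continuous image of a compact set. First I would record the reduction that makes this possible. Since $\Omega$ is shift-invariant and the defining condition of $\bar\Delta$ (piecewise constant on each $[nh,(n+1)h)$) is preserved under shifting by integer multiples of $h$, the set $\bar\Delta$ is invariant under $\psi_{kh}$ for every $k\in\mathbb{Z}$: indeed $\{(\psi_{kh}x)(ih)\}_i$ is the $k$-fold shift of $\{x(ih)\}_i\in\Omega$, and $(\psi_{kh}x)$ remains piecewise constant on each $[nh,(n+1)h)$. Consequently, writing an arbitrary $t\in\mathbb{R}$ as $t=kh+s$ with $k\in\mathbb{Z}$ and $s\in[0,h]$, we get $x(\cdot+t)=(\psi_{kh}x)(\cdot+s)=\psi_s(\psi_{kh}x)$ with $\psi_{kh}x\in\bar\Delta$; hence $\Delta=\psi([0,h]\times\bar\Delta)$. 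Since $\bar\Delta$ is compact (being homeomorphic to $\Omega$) and $[0,h]$ is compact, $[0,h]\times\bar\Delta$ is compact, so it remains only to show that $\psi$ is continuous on $\mathbb{R}\times\Delta$, after which compactness of $\Delta$ follows because the continuous image of a compact set is compact.

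Second, I would upgrade Lemma \ref{psicontinuous} to joint continuity of $\psi$. Given $t_n\to t$ in $\mathbb{R}$ and $x_n\to x$ in $\Delta$, the triangle inequality gives $d(\psi_{t_n}x_n,\psi_t x)\le d(\psi_{t_n}x_n,\psi_{t_n}x)+d(\psi_{t_n}x,\psi_t x)$. The first term is at most $4^{\lceil |t_n/h|\rceil}d(x_n,x)$ by (the proof of) Lemma \ref{psicontinuous}, and since $(t_n)$ is bounded the exponent is bounded, so this term tends to $0$. For the second term, using the flow property I would reduce to continuity of the orbit map at $0$: $d(\psi_{t_n}x,\psi_t x)=d(\psi_t(\psi_{t_n-t}x),\psi_t x)\le 4^{\lceil |t/h|\rceil}\,d(x,\psi_{t_n-t}x)$, where the constant is fixed since $t$ is; thus it suffices to show $d(x,\psi_\tau x)\to 0$ as $\tau\to 0$ for each fixed $x\in\Delta$.

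Third, the crux: showing $d(x,\psi_\tau x)\to 0$ as $\tau\to 0$. Writing $d(x,\psi_\tau x)=\frac1h\sum_{i\in\mathbb{Z}}4^{-|i|}\int_{ih}^{(i+1)h}\delta(x,\psi_\tau x,t')\,dt'$, each inner integral is bounded by $h$, so the $i$-th summand is dominated by $h\cdot 4^{-|i|}$, which is summable; hence by dominated convergence it is enough to show each inner integral tends to $0$. But $x$ is piecewise constant on $[ih,(i+1)h)$, so it has only finitely many discontinuities there, and $\{t'\in[ih,(i+1)h): x(t')\neq x(t'+\tau)\}$ is contained in a union of finitely many intervals of total length $O(|\tau|)$; hence each inner integral is $O(|\tau|)\to 0$. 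This establishes orbit-map continuity, completing the continuity of $\psi$ and therefore the compactness of $\Delta$.

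I expect the main obstacle to be precisely this last step -- continuity in the time variable -- since the excerpt so far has only established continuity of each $\psi_t$ in the space variable; one must exploit the piecewise-constant structure of elements of $\Delta$ (not merely of $\bar\Delta$), together with the summability of the weights, to control the weighted integral uniformly in the far-away coordinates. An alternative route avoiding joint continuity is a direct sequential-compactness argument: given $(y_n)\subseteq\Delta$, write $y_n=\psi_{s_n}x_n$ with $s_n\in[0,h]$ and $x_n\in\bar\Delta$, pass to subsequences with $s_n\to s$ and $x_n\to x\in\bar\Delta$, and show $y_n\to\psi_s x\in\Delta$ via the same two-term split ($d(\psi_{s_n}x_n,\psi_{s_n}x)\le 4\,d(x_n,x)\to 0$ and $d(\psi_{s_n}x,\psi_s x)\to 0$ by the orbit-map estimate). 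This still requires the orbit-map continuity, so the essential difficulty is unchanged.
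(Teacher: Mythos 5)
Your argument is correct, but it takes a genuinely different route from the paper's. The paper proves sequential compactness directly: it identifies each element of $\Delta$ with a pair consisting of an offset in $[0,h)$ and a symbol sequence in $\Omega$, extracts a subsequence along which the offsets converge, runs a pigeonhole/diagonal argument to make the symbol sequences converge coordinate by coordinate, and then must separately verify a ``closure'' step, namely that the transitions of the limit function are admissible so that the limit lies in $\Delta$ and not merely in some larger space of step functions. You instead write $\Delta=\psi([0,h]\times\bar\Delta)$ and deduce compactness as the continuous image of a compact set; your justification of the surjectivity of this parametrization via $t=kh+s$ and the $\mathbb{Z}h$-shift-invariance of $\bar\Delta$ is sound, and this route makes the admissibility of limits automatic, since a continuous image of a compact set in a metric space is compact. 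What your approach costs is joint continuity of $\psi$, which the paper never establishes (Lemma \ref{psicontinuous} gives only continuity of each $\psi_t$ in the space variable); your reduction to orbit-map continuity at $0$ and the estimate that $\{t'\in[ih,(i+1)h):x(t')\neq x(t'+\tau)\}$ has measure $O(|\tau|)$ uniformly in $i$ (an element of $\Delta$ has at most two jumps in any window of length $h$) are correct, and in fact yield the explicit bound $d(x,\psi_\tau x)\le C|\tau|/h$ directly, so the appeal to dominated convergence is unnecessary. As a by-product you obtain joint continuity of the flow on $\Delta$, a useful fact the paper leaves unstated; conversely, the paper's hands-on argument avoids any continuity considerations in the time variable at the expense of the explicit diagonal extraction and the closure check.
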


\begin{proof}
We will show that given any sequence $\{x^n\}, n \in \mathbb{N}$ of functions $x^n \in \Delta$, there exists a subseqence converging to some $x \in \Delta$.  To do this, we consider the space $\Delta$ to be the product of a circle of length $h$ with the set of allowable bi-infinite sequences, $S^1 \times \Omega$, where $S^1 \equiv \mathbb{R}$ mod $h$; that is, $\Delta \sim S^1\times\overline{\Delta} \sim \Omega\times S^1$ (not with the product topology, however).  An element $x^n$ of $\Delta$ identifies with an element of $S^1 \times \Omega$ by taking $y^n \in \Omega$ to be the sequence of constant values of $x^n$, with $x^n(0) \equiv y^n_0, x^n(h) \equiv y^n_1,$ etc., and taking $\tau \in [0,h)$ to be the unique offset so that $x^n(t-\tau) \in \Delta$.

$S^1$ is compact.  Therefore, given the sequence $\{x^n\} \in S^1 \times \Omega$, there exists a subsequence $\{x^{n_k}\}, k \in \mathbb{N}$ for which the offsets $\{\tau^{n_k}\}$ converge to a value in $[0,h)$.  Therefore, there exists a convergent subsequence $\{\tau^{n_k}\}$ of $\{\tau^n\}$.

For this subsequence $\{x^{n_k}\}$, we want to show that there exists a subsequence $\{x^{n_{k_j}}\}, j \in \mathbb{N}$ such that the bi-infinite sequences $\{y^{n_{k_j}}\}$ converge.  We do this inductively, beginning with the subsequence $\{y^{n_{k_j}}_0\}$.  We know that  $\{y^{n_k}_0\}$ is an infinite sequence of finitely many values, since the state space $S$ is finite.  Therefore, by the pigeonhole principle, there is one value that is repeated infinitely many times.  Take $\{y^{n_{k_j}}_0\}$ to be this value, $s_0$, so that $x^{n_{k_j}}(0) = s_0$ for all $j \in \mathbb{N}$.

Now, we induct.  Given a subsequence of $\{x^{n_k}\}$ that converges at $t = 0, h, -h, 2h, -2h, ..., $ $mh, -mh$, we deduce that there must be a subsequence of this subsequence with one value $x^{n_k}((m+1)h) = s_{(m+1)h}$ repeated infinitely many times, and likewise for $x^{n_k}(-(m+1)h) = s_{-(m+1)h}$.  In this manner, we get an infinite subsequence $\{y^{n_{k_j}}\}$, hence $\{x^{n_{k_j}}\}$, converging to a function that is piecewise constant on $[\tau + nh, \tau + (n+1)h), n \in \mathbb{Z}, \tau \in [0,h]$, with values in $S$.

Finally, we have to show closure.  That is, we need to show that transitions $x(mh) \rightarrow x((m+1)h)$ in our limit function are allowable.  Otherwise, all we would have shown is compactness of $\bar{\Lambda}$, rather than compactness of $\Delta$.  Suppose that $x \notin \Delta$.  Then, there exists some $m \in \mathbb{Z}$ such that the transition $x(mh) \rightarrow x((m+1)h)$ is not allowed.  But, since $\{x^{n_{k_j}}\}$ converges to $x$, we can take $N$ large enough so that $j > N \Rightarrow x^{n_{k_j}}(mh) = x(mh), x^{n_{k_j}}((m+1)h) = x((m+1)h)$.  And, $x^{n_{k_j}} \in \Delta$, so the transition $x^{n_{k_j}}(mh) \rightarrow x^{n_{k_j}}((m+1)h)$ must be allowable.  This is a contradiction.  Therefore, $\{x^{n_{k_j}}\} \rightarrow x \in \Delta$.

\end{proof}

\subsection{Morse Sets and Topological Chaos}

For the following definitions and Proposition \ref{order}, taken from \cite{Kliemann}, let $X$ be a compact metric space with an associated flow $\Phi$.  These definitions have already been introduced in Chapter 3; we merely restate them here to remind the reader.

\begin{defn}\label{invariant}
A set $K \subseteq X$ is called invariant if $\Phi(t,x) \in K$ for all $x \in K, t \in \mathbb{R}$.
\end{defn}

\begin{defn}\label{isolated}
A set $K \subseteq X$ is called isolated if there exists a neighborhood $N$ of $K$ (i.e. a set $N$ with $K \subset$ int $N$) such that $\Phi(t,x) \in N$ for all $t \in \mathbb{R}$ implies $x \in K$.
\end{defn}

\begin{defn}\label{morsedecomp}
A Morse Decomposition on $X$ is a finite collection $\{\mathcal{M}_i, i = 1,...,n\}$ of non-void, pairwise disjoint, invariant, isolated, compact sets such that
\begin{enumerate}
\item For all $x \in X, \omega(x), \alpha(x) \subseteq \displaystyle \bigcup_{i=1}^{n}\mathcal{M}_i$.
\item If there exist $\mathcal{M}_0, \mathcal{M}_1, ..., \mathcal{M}_l$ and $x_1,...x_l \in X \setminus \displaystyle \bigcup_{i=1}^{n}\mathcal{M}_i$ with $\alpha(x_j) \subseteq \mathcal{M}_{j-1}$ and $\omega(x_j) \subseteq \mathcal{M}_j$ for $j = 1,...,l$, then $\mathcal{M}_0 \neq \mathcal{M}_l$.  This condition is equivalent to to the statement that there are no cycles between the sets of the Morse decomposition.
\end{enumerate}
The sets $\mathcal{M}_i$ above are called Morse sets.
\end{defn}

\begin{prop}\label{order}
The relation $\preceq$ given by
$$
\mathcal{M}_i \preceq \mathcal{M}_k \mbox{ if there are } \mathcal{M}_i,\mathcal{M}_{j_1},...,\mathcal{M}_{j_l} =
 \mathcal{M}_k \mbox{ and } x_1,...,x_l \in X $$
 $$\mbox{ with } \alpha(x_m) \subseteq \mathcal{M}_{j_{m-1}} \mbox{ and } \omega(x_m) \subseteq \mathcal{M}_{j_m} \mbox{ for } m = 1,...,l.
$$
is an order (satisfying reflexivity, transitivity, and antisymmetry) on the Morse sets $\mathcal{M}_j$ of a Morse decomposition.
\end{prop}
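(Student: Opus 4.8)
The plan is to verify the three order axioms in turn, expecting antisymmetry to be the only one that requires real care. For \emph{reflexivity}, I would fix $i$, pick any point $x\in\mathcal{M}_i$ (possible since Morse sets are non-void), and observe that compactness and invariance of $\mathcal{M}_i$ force the entire orbit of $x$ to stay in $\mathcal{M}_i$, so $\alpha(x),\omega(x)\subseteq\mathcal{M}_i$, both non-empty because $X$ is compact. The length-one chain $\mathcal{M}_i=\mathcal{M}_{j_0},\mathcal{M}_{j_1}=\mathcal{M}_i$ with $x_1=x$ then witnesses $\mathcal{M}_i\preceq\mathcal{M}_i$.

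For \emph{transitivity}, given chains realizing $\mathcal{M}_i\preceq\mathcal{M}_k$ and $\mathcal{M}_k\preceq\mathcal{M}_p$, I would simply concatenate them: the first list of Morse sets ends at $\mathcal{M}_k$, the second begins at $\mathcal{M}_k$, and juxtaposing the two lists together with their connecting points realizes $\mathcal{M}_i\preceq\mathcal{M}_p$ directly from the definition.

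The substantive step is \emph{antisymmetry}, and the obstacle is a mismatch between the definition of $\preceq$, whose connecting points $x_m$ are allowed to lie anywhere in $X$, and the no-cycle condition in Definition \ref{morsedecomp}, which only restricts chains whose points lie in $X\setminus\bigcup_i\mathcal{M}_i$. To bridge this I would first establish a reduction observation: if a connecting point $x_m$ of a chain lies in a Morse set $\mathcal{M}_q$, then that link is degenerate. Indeed, invariance and compactness of $\mathcal{M}_q$ give $\alpha(x_m),\omega(x_m)\subseteq\mathcal{M}_q$; together with $\alpha(x_m)\subseteq\mathcal{M}_{j_{m-1}}$, $\omega(x_m)\subseteq\mathcal{M}_{j_m}$, the non-emptiness of these limit sets, and the pairwise disjointness of the Morse sets, this forces $\mathcal{M}_{j_{m-1}}=\mathcal{M}_q=\mathcal{M}_{j_m}$, so the link may be deleted from the chain without changing its endpoints.

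Finally, suppose $\mathcal{M}_i\preceq\mathcal{M}_k$ and $\mathcal{M}_k\preceq\mathcal{M}_i$. Concatenating the two chains as in the transitivity step produces a closed chain from $\mathcal{M}_i$ back to $\mathcal{M}_i$ that also passes through $\mathcal{M}_k$; I would then delete every link whose connecting point lies in some Morse set. If nothing survives, the reduction observation forces all Morse sets appearing in the original closed chain to coincide, so in particular $\mathcal{M}_i=\mathcal{M}_k$. If something survives, I am left with a closed chain of positive length all of whose connecting points lie in $X\setminus\bigcup_i\mathcal{M}_i$, which is exactly the configuration condition 2 of Definition \ref{morsedecomp} forbids, since that condition would require the first and last Morse sets of such a chain, both equal to $\mathcal{M}_i$ here, to be distinct. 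In either case $\mathcal{M}_i=\mathcal{M}_k$, so $\preceq$ is antisymmetric and hence an order. I expect the reduction observation, and the bookkeeping of how deletions merge consecutive Morse sets in the chain, to be the only genuinely non-routine part.
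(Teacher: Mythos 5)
Your argument is correct, and the care you take over antisymmetry is exactly where the care is needed: the reduction observation (a connecting point lying in a Morse set forces its two adjacent Morse sets to coincide, via invariance, compactness, non-emptiness of limit sets, and pairwise disjointness) is precisely what bridges the gap between the definition of $\preceq$, which allows $x_m$ anywhere in $X$, and the no-cycle condition, which only constrains points outside $\bigcup_i\mathcal{M}_i$. The paper itself gives no proof of Proposition \ref{order}, deferring entirely to \cite{Kliemann}, so there is nothing to compare against directly; your argument is the standard one found there, and I see no gaps.
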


The proof of this proposition can be found in \cite{Kliemann}.

Also, similar to the concept of lifts of strongly connected components of $G$ in $\Omega$, we have lifts of strongly connected components in $\Delta$.

\begin{defn}\label{lifts}
The lift $\Delta_C \subseteq\Delta$ of a strongly connected component $C$ of $G$ is defined by
$$
\Delta_C \equiv \{ f \in \bar\Delta | f(t) \in C \, \mbox{ for all }\, t \in \mathbb{R} \}
$$
$\bar{\Delta_C}$ is defined as 
$$
\bar\Delta_C \equiv \Delta_C \cap \bar\Delta.
$$
That is to say, $\Delta_C$ contains all real time shifts of functions in $\overline{\Delta_C}$. 
\end{defn}
In other words, given a strongly connected component $C$, $\Delta_C$ is the set of all functions $f\in\Delta$ whose ranges are contained in $C$.

Unsurprisingly, these lifts display many of the same qualities as those exhibited in $\Omega$, the first being that they form a Morse Decomposition for $(\Delta,\psi).$ 

\begin{thm}\label{morse1}
The lifts of the strongly connected components $\Delta_C$ are Morse sets for the dynamical system $\psi$.
\end{thm}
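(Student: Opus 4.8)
The plan is to prove that the finite family $\{\Delta_C : C \text{ a strongly connected component of } G\}$ is a Morse decomposition of $(\Delta,\psi)$ in the sense of Definition~\ref{morsedecomp}, so that each $\Delta_C$ is a Morse set by definition; the family is finite because $V$ is finite. Several axioms are quick. \emph{Nonvoidness} follows from the corresponding fact for $\Omega$ in \cite{discretesystems} together with $\sigma(\Omega_C)\subseteq\bar\Delta_C\subseteq\Delta_C$ (Proposition~\ref{isoiso}). \emph{Invariance} is clear since $\psi(t,f)=f(\cdot+t)$ has the same range as $f$. \emph{Pairwise disjointness} holds because distinct strongly connected components are disjoint as subsets of $V$, so an $f\in\Delta_C\cap\Delta_{C'}$ would take values in $C\cap C'=\varnothing$. \emph{Compactness}: $\Delta$ is compact by Lemma~\ref{deltabarcompact}, so it is enough to check $\Delta_C$ is closed; if $x^n\to x$ with $x^n\in\Delta_C$, then on each constancy interval the $x^n$ are eventually equal to the constant value of $x$ there (the relevant values lie in the finite set $V$, exactly as in the closure step of Lemma~\ref{deltabarcompact}), so $x$ has range in $C$. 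For the \emph{isolating neighborhood} I would take $N=\{x\in\Delta: d(x,\Delta_C)<\tfrac12\}$, which is open and contains $\Delta_C$. If $x\notin\Delta_C$ then $x(t_0)=v\notin C$ for some $t_0$, and $x$ equals $v$ on the whole constancy interval $[\tau+nh,\tau+(n+1)h)$ around $t_0$; shifting by $s_0=-(\tau+nh)$ makes $\psi(s_0,x)$ constantly equal to $v$ on $[0,h)$, so $\delta(\psi(s_0,x),g,t)=1$ for all $t\in[0,h)$ and all $g\in\Delta_C$, whence $f(\psi(s_0,x),g,0)=1$ and $d(\psi(s_0,x),g)\ge 1$. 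Thus $\psi(s_0,x)\notin N$, so $\psi(t,x)\in N$ for all $t$ forces $x\in\Delta_C$.

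The real work is in the two limit-set conditions. For Definition~\ref{morsedecomp}(1), $\omega(x)$ and $\alpha(x)$ are nonempty by compactness, and I claim every point of $\omega(x)$ lies in some $\Delta_C$. Let $y=\lim_k\psi(t_k,x)$ with $t_k\to\infty$. Convergence in $d$ forces $\psi(t_k,x)$ to agree with $y$ on any prescribed window for all large $k$, so if $y$ takes distinct values $a,b$ at positions $q_a<q_b$ then for all large $k$ the path $x$ has value $a$ at position $t_k+q_a$ and value $b$ at $t_k+q_b$; picking $k<k'$ with $t_{k'}-t_k$ large, $x$ displays $a$, then $b$, then $a$, which produces directed $G$-paths $a\rightsquigarrow b$ and $b\rightsquigarrow a$, so $a$ and $b$ lie in the same strongly connected component. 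Hence the range of $y$ is contained in a single component $C$, i.e.\ $y\in\Delta_C$; applying the argument to the reversed flow gives $\alpha(x)\subseteq\bigcup_C\Delta_C$ as well.

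For the \emph{no-cycle} condition (2), suppose $\Delta_{C_0},\dots,\Delta_{C_l}$ and $x_1,\dots,x_l\in\Delta\setminus\bigcup_C\Delta_C$ satisfy $\alpha(x_j)\subseteq\Delta_{C_{j-1}}$ and $\omega(x_j)\subseteq\Delta_{C_j}$. First I would show $\omega(x_j)\subseteq\Delta_{C_j}$ implies $x_j(t)\in C_j$ for all large $t$: otherwise $x_j(t_k)\notin C_j$ along some $t_k\to\infty$, and passing to a subsequence on which these (finitely many) values and the offsets stabilize yields a point of $\omega(x_j)$ whose range meets $V\setminus C_j$, a contradiction; symmetrically $x_j(t)\in C_{j-1}$ for all small $t$. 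If some $C_{j-1}=C_j$, then reading $x_j$ from a sufficiently negative position (in $C_{j-1}$) to a sufficiently positive one (in $C_j$) shows every value of $x_j$ is both reachable from and reaches $C_j$, hence lies in $C_j$, so $x_j\in\Delta_{C_j}$, contradicting the choice of $x_j$; thus $C_{j-1}\ne C_j$, and the same reading yields a directed $G$-path from $C_{j-1}$ to $C_j$. If now $\Delta_{C_0}=\Delta_{C_l}$, i.e.\ $C_0=C_l$, then the path $C_0\rightsquigarrow C_1$ and the concatenation $C_1\rightsquigarrow\cdots\rightsquigarrow C_l=C_0$ make $C_0$ and $C_1$ mutually reachable, forcing $C_0=C_1$, a contradiction; so $\Delta_{C_0}\ne\Delta_{C_l}$ and all axioms hold. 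The step I expect to be the main obstacle is the rigorous bridge between convergence in the integral metric $d$ and agreement of the underlying piecewise-constant paths on long windows, uniformly in the continuously varying offset $\tau\in[0,h)$ — precisely where the argument must go beyond the topological conjugacy with $\Omega$ and control the symbolic and real-time data simultaneously; by comparison the graph-theoretic cores (``$a$, then $b$, then $a$'' implies a common component) are routine.
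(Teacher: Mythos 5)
Your proposal is correct, and for the five structural conditions (non-void, pairwise disjoint, invariant, isolated, compact) it runs along essentially the same lines as the paper: the paper also argues disjointness from maximality of strongly connected components, invariance from the fact that shifting preserves the range, isolation by shifting a function $g\notin\Delta_C$ so that a value outside $C$ occupies a full constancy interval near $0$ and then reading off a definite lower bound on the distance to $\Delta_C$ (the paper uses the threshold $1/4$ where you get $1$; your $s_0$ has a harmless sign slip but the idea is identical), and compactness by the same closure argument as in Lemma~\ref{deltabarcompact}. Where you genuinely diverge is that you prove more than the paper does. The paper's proof never verifies condition (1) of Definition~\ref{morsedecomp} --- that $\omega(x)$ and $\alpha(x)$ land in $\bigcup_C\Delta_C$ for every $x\in\Delta$ --- whereas you supply the ``$a$, then $b$, then $a$'' argument showing the range of any limit function sits inside a single strongly connected component; this is a real addition, and the bridge you flag (convergence in the integral metric forcing agreement of the symbolic data on long windows, up to the offset $\tau$) does go through because a value of the limit function persists on a subinterval of positive length, which the approximants must eventually match except on a set of small measure. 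Likewise, for the no-cycle condition the paper only rules out a two-set cycle ($f,g$ with $\alpha$ and $\omega$ limits swapped between $C$ and $C'$), while you handle chains of arbitrary length, including the needed intermediate claim that $\omega(x_j)\subseteq\Delta_{C_j}$ forces $x_j(t)\in C_j$ for all large $t$ and the observation that $C_{j-1}=C_j$ would put $x_j$ back in $\Delta_{C_j}$. So your route is the paper's route made complete; the cost is length, and the payoff is that the family is actually verified to be a Morse decomposition rather than only most of one.
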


\begin{proof}
We check each of the conditions in turn.
\begin{enumerate}
\item \textbf{Non-void}
Since the empty set is not a strongly connected component, the lift of any strongly connected component must be non-empty.
\item \textbf{Pairwise disjoint}
Suppose that there exists $f \in \Delta_C, \Delta_{C'}$ with $C \neq C'$.  Then, $f(0) \in C, C'$.  But, by the maximality of strongly connected components, $f(0) \in C, C'$ implies $C = C'$.  So, $\Delta_{C} = \Delta_{C'}$.
\item \textbf{Invariant}
By construction of $\Delta$, $\psi(t,f) \in \Delta$ for all $t \in \mathbb{R}, f \in \Delta$.  And, if $f(s) \in C$ for all $s \in \mathbb{R}$, then $\psi(t,f)(s) = f(t+s) \in C$.  So, $\psi(t,f) \in \Delta_C$ for all $t \in \mathbb{R}, f \in \Delta_C$.
\item \textbf{Isolated}
Pick $\epsilon = 1/4$.  Suppose that there exists $g \notin \Delta_C$ such that for some $f \in \Delta_C$, $d(g,f) < \epsilon$.  Since $g \notin \Delta_C$, there exists $t_0$ such that $g(t_0) \notin C$.  Let $g' = \psi(-t_0,g)$, so that $g'(0) \notin C$.  But then, $g'$ differs from any function in $\Delta_C$ on at least some interval of length $h$ containing $0$.  The distance, therefore, between $g'$ and any function in $\Delta_C$ must be greater than $1/4$.  So, given any $g \notin \Delta_C$ but within $1/4$ of $\Delta_C$, there exists $t_0$ such that $d(\psi(t_0, g),f') > \frac{1}{4}$ for any $f' \in \Delta_C$.  Hence, $\Delta_C$ is isolated.  
\item \textbf{Compact}
By an argument similar to that for compactness of $\Delta$ and by compactness of $\Omega_C$, $\Delta_C$ is compact.

\item \textbf{No cycles}
Again, this is similar to the corresponding proof in \cite{discretesystems}.  Suppose that there exist $f, g \in \bar{\Delta}$ such that $\alpha(f) \subseteq \bar{\Delta}_C$, $\alpha(g) \subseteq \bar{\Delta}_{C'}$ and $\omega(g) \subseteq \bar{\Delta}_C$, $\omega(f) \subseteq \bar{\Delta}_{C'}$.  Then, since all the transitions in $f,g$ must be allowable, there must exist an admissible path from $C$ to $C'$ as well as one from $C'$ to $C$.  But, this contradicts maximality of strongly connected components.  So, no such cycle exists.
\end{enumerate}

\end{proof}


It turns out that the flow within each of the lifts of strongly connected components is topologically transitive as well, just like as in $\Omega$.  We first restate the definition of topological transitivity below.

\begin{defn}\label{topologicallytransitive}
A flow on a metric space $X$ is called topologically transitive if there exists $x \in X$ such that $\omega(x) = X$.
\end{defn}

\begin{lem}\label{omegalimit}
Given any strongly connected component $C$, there exists $f^* \in \Delta_C$ such that $\omega(f^*) = \Delta_C$ (i.e. $\psi$ is topologically transitive on lifts of strongly connected components).
\end{lem}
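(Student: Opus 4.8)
The plan is to reduce the statement to the corresponding fact about the discrete system on $\Omega_C$ proved in \cite{discretesystems}, transport it through the isometric isomorphism $\sigma$ of Proposition \ref{isoiso}, and then absorb the continuous-time offset using continuity of the shift (Lemma \ref{psicontinuous}). Concretely, recall (or reconstruct) the fact that the left shift is topologically transitive on $\Omega_C$: since $C$ is strongly connected there are only countably many finite admissible words supported in $C$, and one can concatenate an enumeration of all of them, inserting a connecting path inside $C$ between consecutive words and extending the sequence arbitrarily (within $C$) to the left. The resulting bi-infinite path $y^* \in \Omega_C$ has the property that every finite admissible block occurring in any element of $\Omega_C$ appears arbitrarily far out in the forward part of $y^*$, so $\omega_{\Phi}(y^*) = \Omega_C$.

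Next I would set $f^* = \sigma(y^*) \in \bar\Delta_C \subseteq \Delta_C$ and prove the two inclusions for $\omega(f^*)$ with respect to $\psi$. The inclusion $\omega(f^*) \subseteq \Delta_C$ is immediate: by Theorem \ref{morse1} the lift $\Delta_C$ is compact (hence closed) and invariant, and $f^* \in \Delta_C$, so $\omega(f^*) \subseteq \overline{\{\psi(t,f^*) : t \geq 0\}} \subseteq \Delta_C$.

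For the reverse inclusion, take an arbitrary $g \in \Delta_C$. By Definition \ref{lifts}, $g = \psi(\tau, \bar g)$ for some $\bar g \in \bar\Delta_C$ and some $\tau \in [0,h)$ (any real shift can be reduced mod $h$ since shifting an element of $\bar\Delta_C$ by $h$ returns an element of $\bar\Delta_C$). Write $\bar g = \sigma(z)$ with $z \in \Omega_C$. Since $\omega_{\Phi}(y^*) = \Omega_C$, there is an increasing sequence $m_k \to \infty$ with $\Phi(m_k, y^*) \to z$ in $\Omega$; applying the isometry $\sigma$ gives $\psi(m_k h, f^*) = \sigma(\Phi(m_k, y^*)) \to \sigma(z) = \bar g$ in $\bar\Delta$. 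Now use the flow property of $\psi$ and continuity of $\psi_\tau$ (Lemma \ref{psicontinuous}):
$$ \psi(m_k h + \tau, f^*) = \psi(\tau, \psi(m_k h, f^*)) \longrightarrow \psi(\tau, \bar g) = g, $$
and since $m_k h + \tau \to \infty$ this shows $g \in \omega(f^*)$. Hence $\Delta_C \subseteq \omega(f^*)$, completing the proof.

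The main obstacle is the step that has no analogue in the discrete setting: ensuring that the continuous-time $\omega$-limit set recovers all of $\Delta_C$ rather than merely $\bar\Delta_C$, i.e. that every fractional offset $\tau \in [0,h)$ is realized in $\omega(f^*)$. The resolution above is to observe that conjugating the approximating orbit segment by the fixed homeomorphism $\psi_\tau$ turns a dense-forward-orbit statement in $\bar\Delta_C$ into one in $\Delta_C$; everything else is a routine transfer of the discrete result through $\sigma$ together with compactness and invariance of $\Delta_C$ already established in Theorem \ref{morse1}.
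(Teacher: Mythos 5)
Your proposal is correct and follows essentially the same route as the paper: transfer the discrete transitivity result on $\Omega_C$ through the isomorphism $\sigma$ to get $\bar\Delta_C\subseteq\omega(f^*)$, use the real-time shifts to upgrade this to all of $\Delta_C$, and use invariance of $\Delta_C$ for the reverse inclusion. Your version is more careful than the paper's at the offset step (making the ``shifts'' argument precise via continuity of $\psi_\tau$) and in sketching the discrete construction, but the underlying argument is the same.
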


\begin{proof}
It has been shown in \cite{discretesystems} that for the discrete system, there exists $x^* \in \Omega_C$ such that $\omega(x^*) = \Omega_C$.  This proof is also state in Chapter 2. By the correspondence $\sigma$ as defined in Proposition \ref{isoiso} between sequences in $\Omega$ and functions in $\Delta$, there exists $f^* \in \Delta_C$ given by $$ f^*(nh) = x^*_n, n \in \mathbb{Z}$$ such that $\Delta_C \subseteq \omega(f^*)$.  And, since $\bar{\Delta}_C$ is given by the shifts $\psi(t,\Delta_C)$, it is clear that $\Delta_C \subseteq \omega(f^*)$.  $\Delta_C$ is invariant by Theorem \ref{morse1}, so $\omega(f^*) \subseteq \Delta_C$, and $\omega(f^*) = \Delta_C$.
\end{proof}

Since the $\omega$-limit sets of a point on a compact space are connected, we get the following corollary.

\begin{cor}\label{connected}
$\Delta_C$ is connected.
\end{cor}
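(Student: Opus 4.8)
The plan is to invoke the standard fact from topological dynamics that the $\omega$-limit set of any point in a compact metric space under a continuous flow is connected (see, e.g., \cite{Kliemann}), and then specialize it to the transitive point $f^* \in \Delta_C$ produced in Lemma \ref{omegalimit}. Since Lemma \ref{omegalimit} gives $\omega(f^*) = \Delta_C$, connectedness of $\omega(f^*)$ immediately yields connectedness of $\Delta_C$, which is exactly the claim.

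To make the argument self-contained rather than merely citing the black box, I would first recall why $\omega(f^*)$ is connected: $\Delta$ is compact by Lemma \ref{deltabarcompact}, and $\psi_t$ is continuous by Lemma \ref{psicontinuous}, so $\psi$ is a flow on a compact metric space. For such a flow, $\omega(f^*) = \bigcap_{T \geq 0} \overline{\{\psi(t, f^*) : t \geq T\}}$ is a nested intersection of nonempty compact connected sets (each closure of a forward orbit tail is connected because the forward orbit $\{\psi(t,f^*) : t \geq T\}$ is the continuous image of the connected set $[T, \infty)$, and closures of connected sets are connected), hence is itself nonempty, compact, and connected by the standard nested-intersection theorem for continua. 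Applying this with $\omega(f^*) = \Delta_C$ finishes the proof.

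I do not anticipate a genuine obstacle here — this is a one-line corollary of Lemma \ref{omegalimit} plus a classical structural fact. The only point requiring a moment's care is ensuring the hypotheses of the "$\omega$-limit sets are connected" theorem are actually in force: namely that we are on a compact space (yes, by Lemma \ref{deltabarcompact}) and that the flow is genuinely continuous, which by Lemma \ref{psicontinuous} holds at least in each time slice $\psi_t$; if the intended citation to \cite{Kliemann} requires joint continuity in $(t,x)$, one would note that this too follows routinely from the metric estimates already established, but since the corollary as stated defers to \cite{Kliemann}, it suffices to cite that the $\omega$-limit set of a point on a compact space is connected and combine it with Lemma \ref{omegalimit}. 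So the write-up is essentially: "By Lemma \ref{omegalimit}, $\Delta_C = \omega(f^*)$ for some $f^* \in \Delta_C$; since $\omega$-limit sets of points in a compact space are connected, $\Delta_C$ is connected."
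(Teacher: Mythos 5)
Your proposal is correct and matches the paper's argument exactly: the paper derives the corollary in one line from Lemma \ref{omegalimit} together with the standard fact that $\omega$-limit sets of points on a compact space are connected. Your additional unpacking of that fact via the nested intersection of closures of orbit tails is a fine (optional) elaboration, but the route is the same.
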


Once again, just as in $\Omega_C$, we see that these points are dense in $\Delta_C$ for any strongly connected component $C$.

\begin{prop}\label{dense}
The set of all functions $f^*$ satisfying $\omega(f^*) = \Delta_C$ is dense in $\Delta_C$
\end{prop}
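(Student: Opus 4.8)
The plan is to transport the density statement from $\Omega_C$ to $\Delta_C$ using the correspondence $\sigma$ together with the flow $\psi$, which supplies the continuous-time offsets that turn $\bar\Delta_C$ into $\Delta_C$. First I would recall from \cite{discretesystems} that the set $D_C \equiv \{ x^* \in \Omega_C : \omega(x^*) = \Omega_C \}$ is dense in $\Omega_C$; this is the discrete-time input we are allowed to assume. Via the isometric isomorphism $\sigma$ of Proposition \ref{isoiso}, the image $\sigma(D_C)$ is then dense in $\bar\Delta_C$, and by Lemma \ref{omegalimit} (more precisely, by its proof) each such $f^* = \sigma(x^*)$ satisfies $\omega(f^*) = \Delta_C$. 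So the set of transitive points is already dense in the ``grid-aligned'' slice $\bar\Delta_C$.

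Next I would handle a general $g \in \Delta_C$. Write $g = \psi(\tau, f)$ for some $f \in \bar\Delta_C$ and some offset $\tau \in [0,h)$, using the identification $\Delta_C \sim \bar\Delta_C \times S^1$ from the proof of Lemma \ref{deltabarcompact}. Fix $\epsilon > 0$. Choose $f^* \in \bar\Delta_C$ with $\omega(f^*) = \Delta_C$ and $d(f, f^*)$ small; then set $g^* = \psi(\tau, f^*)$. Two things need checking: (i) $g^*$ is still a transitive point, i.e. $\omega(g^*) = \Delta_C$, and (ii) $d(g, g^*)$ is small. For (i), note $\omega(\psi(\tau, f^*)) = \omega(f^*) = \Delta_C$ since $\omega$-limit sets are invariant under the flow and shifting the basepoint by a fixed time does not change the $\omega$-limit set. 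For (ii), I would invoke the continuity of $\psi_\tau$ from Lemma \ref{psicontinuous}: given $\epsilon$, there is $\delta$ with $d(f,f^*) < \delta \Rightarrow d(\psi_\tau(f), \psi_\tau(f^*)) < \epsilon$, and density of $\sigma(D_C)$ in $\bar\Delta_C$ lets us pick such an $f^*$.

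The main obstacle is a uniformity issue in step (ii): Lemma \ref{psicontinuous} gives $\delta = \epsilon / 4^{\lceil |\tau/h| \rceil}$, and although $\tau$ ranges only over $[0,h)$ so that $\lceil |\tau/h| \rceil \le 1$ and hence $\delta = \epsilon/4$ works uniformly, one should state this explicitly rather than leaving $\delta$ dependent on the point $g$. A secondary subtlety is that the decomposition $g = \psi(\tau, f)$ is not literally unique as stated — one must fix a canonical choice of $(f,\tau)$, exactly as in the proof of Lemma \ref{deltabarcompact}, and check that the grid points of $f$ determine an element of $\Omega_C$ so that $\sigma(D_C)$ really is available near $f$. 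Once these bookkeeping points are pinned down, the argument is: approximate $f$ by a transitive $f^*$ within $\delta = \epsilon/4$, shift both by $\tau$, and conclude $d(g,g^*) < \epsilon$ with $g^*$ transitive, so transitive points are dense in $\Delta_C$.
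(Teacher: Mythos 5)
Your proposal is correct, but it takes a genuinely different route from the paper. The paper's proof is a one-line consequence of Lemma \ref{omegalimit} alone: given any $f \in \Delta_C$ and the transitive point $f^*$ with $\omega(f^*) = \Delta_C$, the fact that $f \in \omega(f^*)$ already produces a time $t$ with $d(\psi(t,f^*), f) < \epsilon$, and $\psi(t,f^*)$ is itself transitive because shifting along the flow does not change the $\omega$-limit set; hence the single orbit of $f^*$ is a dense set of transitive points. You instead import the density of transitive points in $\Omega_C$ from \cite{discretesystems}, push it through the isometry $\sigma$ to get density in $\bar\Delta_C$, and then reach a general $g = \psi(\tau, f) \in \Delta_C$ by shifting an approximant of $f$ by the same offset $\tau$, using the uniform Lipschitz bound $d(\psi_\tau(f), \psi_\tau(f^*)) \le 4^{\lceil|\tau/h|\rceil} d(f,f^*) \le 4\, d(f,f^*)$ for $\tau \in [0,h)$. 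Your argument is valid and your two flagged subtleties (uniformity of $\delta$ over $\tau \in [0,h)$, and fixing a canonical decomposition $g = \psi(\tau,f)$) are the right ones to pin down; what it costs is an extra external input (the discrete density theorem) and the bookkeeping around $\bar\Delta_C$ versus $\Delta_C$, neither of which the paper's argument needs. What it buys is a more explicit picture of \emph{which} transitive points approximate $g$ (grid-aligned ones shifted by the same offset), whereas the paper's proof only exhibits points on one distinguished orbit.
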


\begin{proof}
Given $f \in \Delta_C$, there exists $f^*$ such that $f \in \omega(f^*)$, by Lemma \ref{omegalimit}.  Therefore, given $\epsilon > 0$, there exists $t \in \mathbb{R}$ such that $d(\psi(t,f^*),f) < \epsilon$.   $\omega(f^*) = \Delta_C$ implies $\omega(\psi(t,f^*)) = \Delta_C$. So, for any $f \in \Delta_C$ and any $\epsilon > 0$, there exists a function $\psi(t,f^*) \in \Delta_C$ with $d(\psi(t,f^*),f) < \epsilon$ and $\omega(\psi(t,f^*)) = \Delta_C$.
\end{proof}

The above propositions serve to show that the lifts of strongly connected components $\Delta_C$ can not be broken up into smaller invariant components, which serves useful in showing that this Morse Decomposition given by the lifts of strongly connected components is in fact the finest Morse Decomposition that exists on $\Delta,\psi$. 

\begin{defn}\label{finer}
A Morse Decomposition $\{\mathcal{M}_1,...,\mathcal{M}_n\}$ is called finer than a Morse Decomposition $\{\mathcal{M}'_1,...,\mathcal{M}'_l\}$ if for all $j \in \{1,...,l\}$ there exists $i \in \{1,...,n\}$ such that $\mathcal{M}_i \subseteq \mathcal{M}'_j$, where containment is proper for at least one $j$.
\end{defn}

\begin{thm}\label{finest}
The lifts of the strongly connected components $\Delta_C$ form a finest Morse decomposition on $\Delta$.
\end{thm}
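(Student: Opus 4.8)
The plan is to argue by contradiction, using the topological transitivity established in Lemma \ref{omegalimit} together with the connectedness from Corollary \ref{connected}. By Theorem \ref{morse1} we already know $\{\Delta_C\}$ is a Morse decomposition, so it is legitimate to ask whether it is finest. Suppose instead that $\{\mathcal{M}_1,\dots,\mathcal{M}_n\}$ is a Morse decomposition that is finer than $\{\Delta_C\}$ in the sense of Definition \ref{finer}. Then for every strongly connected component $C$ there is an index $i$ with $\mathcal{M}_i\subseteq\Delta_C$, and for at least one component $C_0$ this inclusion is proper; fix such a $C_0$ and an index $i_0$ with $\mathcal{M}_{i_0}\subsetneq\Delta_{C_0}$.

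The next step is a covering argument. By Lemma \ref{omegalimit} choose $f^*\in\Delta_{C_0}$ with $\omega(f^*)=\Delta_{C_0}$. Applying condition (1) of Definition \ref{morsedecomp} to the decomposition $\{\mathcal{M}_i\}$ gives $\omega(f^*)\subseteq\bigcup_{i=1}^n\mathcal{M}_i$, hence $\Delta_{C_0}\subseteq\bigcup_{i=1}^n\mathcal{M}_i$. Now consider the sets $\mathcal{M}_i\cap\Delta_{C_0}$ for $i=1,\dots,n$: they are finitely many, pairwise disjoint (the Morse sets $\mathcal{M}_i$ are), closed in $\Delta_{C_0}$ (each $\mathcal{M}_i$ is compact and $\Delta_{C_0}$ is compact by Theorem \ref{morse1}), and together they cover $\Delta_{C_0}$. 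A finite disjoint family of closed sets covering a space is automatically a partition into clopen sets, so since $\Delta_{C_0}$ is connected by Corollary \ref{connected}, exactly one of these sets is nonempty; that is, $\Delta_{C_0}\subseteq\mathcal{M}_{i_1}$ for a single index $i_1$.

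Finally, combine the inclusions: $\mathcal{M}_{i_0}\subseteq\Delta_{C_0}\subseteq\mathcal{M}_{i_1}$. Since the Morse sets of $\{\mathcal{M}_i\}$ are pairwise disjoint, $\mathcal{M}_{i_0}\cap\mathcal{M}_{i_1}\neq\varnothing$ forces $i_0=i_1$, whence $\mathcal{M}_{i_0}=\Delta_{C_0}$, contradicting $\mathcal{M}_{i_0}\subsetneq\Delta_{C_0}$. Therefore no Morse decomposition is strictly finer than $\{\Delta_C\}$, which is exactly the claim.

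I do not expect a genuine obstacle here: essentially all the work has been front-loaded into Lemma \ref{omegalimit} (a transitive point in each lift) and Corollary \ref{connected} (connectedness of each lift), and the remainder is point-set topology. The one place warranting a moment's care is that we never need to assume an arbitrary Morse set $\mathcal{M}_i$ of the finer decomposition is contained in some single $\Delta_C$; the transitive point $f^*$ forces the whole of $\Delta_{C_0}$ to be covered, and connectedness then collapses the cover to a single set. (One could alternatively phrase this via the general principle that, when a finest Morse decomposition exists, its Morse sets are the chain transitive components, together with the fact that a topologically transitive set is chain transitive; but the self-contained argument above is shorter and avoids invoking that machinery.)
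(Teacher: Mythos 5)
Your proof is correct and follows essentially the same route as the paper: both invoke Lemma \ref{omegalimit} to produce $f^*$ with $\omega(f^*)=\Delta_{C_0}$ and conclude that a Morse set properly contained in $\Delta_{C_0}$ would have to equal it. The only difference is that you make explicit, via the clopen-partition argument and Corollary \ref{connected}, the step the paper states tersely (that the connected set $\omega(f^*)=\Delta_{C_0}$ must land inside a single Morse set of the finer decomposition), which is a welcome tightening rather than a departure.
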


\begin{proof}
Suppose there exists a finer Morse decomposition, $\mathcal{M_1},\ldots,\mathcal{M}_k$.  Then, for some strongly connected component $C$, there exists a Morse set $\mathcal{M}_i \subsetneq \Delta_C$, a proper containment.  By the definition of a Morse set, $\mathcal{M}_i$ must contain the $\omega$-limit sets of $\Delta_C$.  However, by Lemma \ref{omegalimit}, there exists $f^* \in \Delta_C$ such that $\omega(f^*) = \Delta_C$.  Therefore, $\Delta_C \subseteq \mathcal{M}_i$, so $\mathcal{M}_i$ is not a proper subset of $\Delta_C$.  Thus, no finer Morse decomposition exists.

\end{proof}
Lastly, as one would expect, the behavior of the flow on the lifts of strongly connected components exhibits chaotic behavior and sensitive dependence on initial conditions as well, just as in $\Omega$.

\begin{defn}\label{sensitivedependence}
A flow $\Phi$ on a metric space $X$ has sensitive dependence on initial conditions if there exists $\delta > 0$ such that for every $x \in X$ and every neighborhood $B$ of $x$, there exists $y \in B$ and $t > 0$ such that $d(\Phi_t(x),\Phi_t(y)) > \delta$.
\end{defn}

\begin{defn}\label{chaotic}
A flow on a metric space $X$ is chaotic if it has sensitive dependence on initial conditions, density of periodic points, and is topologically transitive.
\end{defn}

\begin{lem}\label{sensdep}
Consider a graph $G$ consisting of a single strongly connected component $C$ for which the out-degree of at least one vertex is at least two.  Then, $\psi$ on $\Delta$ has sensitive dependence on initial conditions.

\end{lem}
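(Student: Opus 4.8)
The plan is to verify Definition \ref{sensitivedependence} directly with the uniform constant $\delta=\tfrac12$; in fact I will produce perturbations whose orbits eventually move a full ``central block'' apart, forcing the $i=0$ term of the metric to become exactly $1$. First I would fix $f\in\Delta$ and a neighborhood $B$ of $f$, pick $\epsilon>0$ with the open ball $B(f,\epsilon)\subseteq B$, and write $f=x(\cdot+t_0)$ with $x\in\bar\Delta$, so that $f$ is constant on each interval $I_n=[nh-t_0,(n+1)h-t_0)$, with values $\{y_n\}_{n\in\mathbb{Z}}$ an admissible bi-infinite path in $G=C$. Using the geometric decay of the weights $4^{-|i|}$, I would choose $N\in\mathbb{N}$ so large that any $g\in\Delta$ agreeing with $f$ on $\bigcup_{n\le N}I_n$ satisfies $d(f,g)\le\tfrac{4}{3}4^{-N}<\epsilon$ (such a $g$ has $f(f,g,i)=0$ for all $i\le N-1$, so only the tail $\sum_{i\ge N}4^{-i}$ survives).

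Next I would build the perturbation $g$ from the hypothesis. Let $v\in V$ have two distinct out-neighbours $u_1\neq u_2$; since $G$ is a single strongly connected component, there is an admissible finite path $y_N=w_0\to w_1\to\cdots\to w_k=v$. Define $g$ to coincide with $f$ on every $I_n$ with $n\le N$; to take the values $w_1,\dots,w_k$ on $I_{N+1},\dots,I_{N+k}$; to take the value $u_1$ on $I_{N+k+1}$ if $y_{N+k+1}\neq u_1$ and the value $u_2$ otherwise; and to continue as any admissible path on the intervals $I_n$ with $n>N+k+1$ (possible, since the out-degree hypothesis forces $|V|\ge2$, so every vertex of the strongly connected $C$ has a successor). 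Then $g\in\Delta$ with the same offset $t_0$, $g$ agrees with $f$ on $\bigcup_{n\le N}I_n$ so $g\in B(f,\epsilon)\subseteq B$, and by the choice of branch $f$ and $g$ take different values at every point of $I_{N+k+1}$.

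Finally, put $t=(N+k+1)h-t_0>0$, the left endpoint of $I_{N+k+1}$. For every $s\in[0,h)$ we have $s+t\in I_{N+k+1}$, hence $\psi_t(f)(s)=f(s+t)\neq g(s+t)=\psi_t(g)(s)$; so $\psi_t f$ and $\psi_t g$ differ throughout $[0,h)$ and thus $f(\psi_t f,\psi_t g,0)=\tfrac1h\int_0^h 1\,ds=1$. Consequently $d(\psi_t f,\psi_t g)\ge f(\psi_t f,\psi_t g,0)=1>\tfrac12=\delta$. Since $f$ and $B$ were arbitrary, $\psi$ has sensitive dependence on initial conditions.

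The only genuine idea here is the second step: steering $g$ to the branching vertex $v$ while keeping its path admissible, and then branching so that $g$ disagrees with $f$ on a whole interval of length $h$ — both of which are forced by strong connectedness together with the out-degree hypothesis. The remainder is bookkeeping; the point I would be most careful about is the relationship between the metric's fixed reference grid $\{[ih,(i+1)h)\}$ and the shifted grid $\{I_n\}$ of $f$ when $t_0\neq0$, but this only influences the choice of $N$ and not the structure of the argument. I do not foresee a real obstacle.
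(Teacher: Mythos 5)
Your proposal is correct and follows essentially the same strategy as the paper's proof: choose $N$ from the tail of the geometric series so that agreement on a central block keeps $d(f,g)<\epsilon$, steer the perturbed path to the vertex of out-degree $\ge 2$, branch so as to disagree with $f$ on a full interval of length $h$, and conclude that the shifted distance is at least $1>\tfrac12$. The only differences are cosmetic refinements: you avoid the paper's case split (on whether the original orbit revisits the branching vertex) by always routing $g$ to $v$ and picking whichever of $u_1,u_2$ differs from $f$'s value there, and you treat the offset $t_0$ relative to the metric's reference grid explicitly, which the paper glosses over.
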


\begin{proof}
Take $\delta = \frac{1}{2}$.  Given $x \in \Delta$, we construct a function $y \in \Delta$ such that $x$ and $y$ are discontinuous at the same times mod $h$.  Given $\varepsilon > 0$, take $N$ large enough so that
$$\displaystyle \sum_{i=-\infty}^{-N}{\frac{1}{4^{|i|}}} + \displaystyle \sum_{i=N}^{\infty}{\frac{1}{4^{|i|}}} < \varepsilon.$$
Thus, taking $x(t) = y(t)$ on $t \in [-Nh,Nh]$ ensures that $d(x,y) < \varepsilon$.  Now, we just need to show that there exists $m>N \in \mathbb{R}$ and $y\in \Delta$ so that $y(t) \neq x(t)$ for all $t \in [mh,(m+1)h)$.  This would imply that $d(\psi(mh,x),\psi(mh,y)) \geq 1 > \delta$.
To show that such an $m$ exists, let $\gamma_1$ denote the vertex with out-degree greater than one.  If there does not exist a $t > Nh$ such that $x(t) = \gamma_1$, then given $x(Nh) = \gamma_N$, let $y(t), t > Nh$ follow a path from $\gamma_N$ to $\gamma_1$.  Such a path must exist since $G$ consists of a single strongly connected component, so there exists a path between any two vertices in $G$.  Thus, we would have $x(t_0+t) \neq \gamma_1, y(t_0+t) = \gamma_1$ for some $t_0 > Nh, t \in [0,h)$, so we can take $m = \frac{t_0}{h}$. If there does exist a $t > Nh$ such that $x(t) = \gamma_1$, then define $\gamma_2 = x(t+h)$, and take $t_1$ so that $x(t_1+t)=\gamma_1$ for $t \in [0,h)$.  Since the out-degree of $\gamma_1$ is greater than one, there exists an edge from $\gamma_1$ to some other vertex, $\gamma_3$ (note that it is possible that either $\gamma_1 = \gamma_2$ or $\gamma_1 = \gamma_3$, but not that $\gamma_2 = \gamma_3$).  Set $y(t_1+h) = \gamma_3$, and take $m = \frac{t_1+h}{h}$.

\end{proof}

Note that Lemma \ref{sensdep} can also be proven by noticing that density of periodic points and topological transitivity imply sensitivity of initial conditions, and thus chaos. Considering together Lemmas \ref{omegalimit} and \ref{sensdep} yields the following result.
\begin{thm}\label{chaos}
Consider a graph consisting of a single strongly connected component for which the out-degree of at least one vertex is greater than one.  Then, $\psi$ is chaotic on $\Delta$.
\end{thm}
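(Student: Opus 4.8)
The plan is to check the three defining properties from Definition~\ref{chaotic} for the flow $\psi$ on $\Delta$, under the standing hypothesis that $G$ is a single strongly connected component $C$; in particular, since every vertex lies in $C$, every $f \in \Delta$ satisfies $f(t) \in C$ for all $t$, so $\Delta = \Delta_C$ (and likewise $\Omega = \Omega_C$).

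Two of the three properties are essentially immediate. For sensitive dependence on initial conditions, the hypothesis of Lemma~\ref{sensdep} is exactly the hypothesis of the theorem, so $\psi$ has sensitive dependence (with constant $\delta = \tfrac{1}{2}$). For topological transitivity, because $\Delta = \Delta_C$, Lemma~\ref{omegalimit} furnishes $f^* \in \Delta$ with $\omega(f^*) = \Delta_C = \Delta$, which is precisely topological transitivity in the sense of Definition~\ref{topologicallytransitive}. Thus the only real content is density of the periodic points of $\psi$ in $\Delta$ (and I would note in passing that, by the Banks--Brooks--Cairns--Davis--Stacey redundancy, density of periodic points together with topological transitivity would in fact re-derive sensitive dependence, so the heart of chaos here is the periodic-point density).

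To prove density of periodic points, I would first identify them: if $z \in \Omega$ is periodic of period $p$, then $\sigma(z) \in \bar\Delta$ (with $\sigma$ as in Proposition~\ref{isoiso}) satisfies $\sigma(z)(t + ph) = \sigma(z)(t)$ for all $t \in \mathbb{R}$, so $\sigma(z)$ is a periodic point of the \emph{full} flow $\psi$ of period $ph$, and hence so is every shift $\psi(\tau, \sigma(z))$. Now fix an arbitrary $g \in \Delta$ and $\varepsilon > 0$. Reducing its defining shift modulo $h$ and absorbing the integer part into $\bar\Delta$, write $g = \psi(\tau, \bar g)$ with $\bar g \in \bar\Delta$ and $\tau \in [0,h)$, and let $y = (\bar g(nh))_{n \in \mathbb{Z}} \in \Omega$, so $\bar g = \sigma(y)$. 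Pick $N$ with $\sum_{|i|>N} 4^{-|i|} < \varepsilon/4$; since $G$ is strongly connected there is an admissible path in $G$ from $y_N$ to $y_{-N}$, so $y$ restricted to $\{-N,\dots,N\}$ can be closed up into a periodic sequence $z \in \Omega$ agreeing with $y$ on that block, giving $d(\bar g, \sigma(z)) = \bar d(y, z) < \varepsilon/4$ (alternatively, one may simply quote density of periodic orbits in $\Omega$ from \cite{discretesystems} and transport it through $\sigma$). Finally $\psi(\tau, \sigma(z))$ is a periodic point of $\psi$, and since $\tau \in [0,h)$ gives $\lceil |\tau/h| \rceil \le 1$, Lemma~\ref{psicontinuous} yields $d\big(g, \psi(\tau, \sigma(z))\big) = d\big(\psi(\tau,\bar g), \psi(\tau,\sigma(z))\big) \le 4\, d(\bar g, \sigma(z)) < \varepsilon$. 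Hence periodic points are dense in $\Delta$, and with the previous paragraph $\psi$ is chaotic on $\Delta$.

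The main obstacle is the bookkeeping in this last step: one must pass from a ``discrete'' periodic point $\sigma(z) \in \bar\Delta$ to an approximant of a genuinely time-shifted $g \in \Delta$, and this is exactly where the uniform bound $4^{\lceil |\tau/h| \rceil} \le 4$ for small shifts from Lemma~\ref{psicontinuous} is needed to keep the constants under control; one must also verify that the ``closing the loop'' construction of $z$ (or the cited density statement for $\Omega$) genuinely produces an element of $\Omega$, i.e. that every transition, including the two splice points and the wrap-around, is admissible — which is guaranteed precisely by the strong connectivity of $G$.
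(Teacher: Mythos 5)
Your proof is correct, and for two of the three conditions it follows the paper exactly: the paper derives Theorem \ref{chaos} simply by ``considering together'' Lemma \ref{omegalimit} (topological transitivity, using $\Delta=\Delta_C$ when $G$ is a single strongly connected component) and Lemma \ref{sensdep} (sensitive dependence), with no further argument. Where you genuinely diverge is that you also prove the third condition in Definition \ref{chaotic} --- density of periodic points --- which the paper never establishes for $\Delta$; it is at best implicit in the appeal to the behavior of $\Omega$ from \cite{discretesystems}, and transporting it to the full space $\Delta$ (as opposed to the conjugate copy $\bar\Delta$) does require the extra step you supply. Your argument for that step is sound: a periodic sequence $z\in\Omega$ yields via $\sigma$ (Proposition \ref{isoiso}) a $ph$-periodic point of the flow, strong connectivity lets you close up a long central block of any $y\in\Omega$ into such a $z$ with $\bar d(y,z)$ small, and the bound $d(\psi_\tau x,\psi_\tau y)\leq 4^{\lceil|\tau/h|\rceil}d(x,y)\leq 4\,d(x,y)$ for $\tau\in[0,h)$ from the proof of Lemma \ref{psicontinuous} controls the loss when you shift back to an arbitrary $g=\psi(\tau,\bar g)\in\Delta$. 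In short, your route buys a complete verification of the paper's own definition of chaos, whereas the paper's route is shorter but leaves the periodic-point condition unaddressed (unless one invokes the Banks--Brooks--Cairns--Davis--Stacey redundancy in the opposite direction, which the paper only gestures at and which in any case replaces sensitivity, not periodic-point density, as the redundant axiom).
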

\begin{cor}
$\psi$ is chaotic on lifts of strongly connected components (where the out-degree of at least one vertex is greater than one).  If the out degree of every vertex is exactly one, then the lift of the strongly connected component is a single periodic orbit. 
\end{cor}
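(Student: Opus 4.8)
The plan is to deduce both halves of the corollary from facts already established for a graph that consists of a single strongly connected component, after first observing that the lift $\Delta_C$, together with the restricted shift $\psi|_{\Delta_C}$, \emph{is} itself a system of exactly the type treated in Theorem~\ref{chaos} and the lemmas preceding it. Concretely, regard $C$ as a directed graph in its own right, retaining only those edges of $G$ both of whose endpoints lie in $C$; these are precisely the transitions that can occur in a function whose range is contained in $C$, so the space built from $C$ via Definition~\ref{delta} is exactly $\Delta_C$, and the shift of Definition~\ref{psi} restricts to $\psi|_{\Delta_C}$. By Theorem~\ref{morse1}, $\Delta_C$ is compact and invariant, and by Lemma~\ref{psicontinuous} the restricted shift is continuous, so $(\Delta_C,\psi|_{\Delta_C})$ is a flow on a compact metric space to which the earlier results apply.

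For the first assertion, assume some vertex of $C$ has out-degree at least two inside $C$. Then $C$ meets the hypotheses of Theorem~\ref{chaos}: it is a single strongly connected component by maximality, and it has a vertex of out-degree greater than one. Applying Theorem~\ref{chaos} to $C$ shows $\psi|_{\Delta_C}$ is chaotic — topologically transitive (also immediate from Lemma~\ref{omegalimit}), with a dense set of periodic points, and with sensitive dependence on initial conditions (also immediate from Lemma~\ref{sensdep} applied to $C$). This is precisely the statement that $\psi$ is chaotic on $\Delta_C$.

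For the second assertion, assume every vertex of $C$ has out-degree exactly one inside $C$. Then ``follow the unique out-edge'' is a well-defined self-map $\tau$ of the finite vertex set of $C$, and the sequence of constant values of any function in $\Delta_C$ is an orbit of $\tau$. Iterating $\tau$ from any vertex, finiteness forces the forward orbit into a cycle; a vertex $v$ lying off that cycle could never be revisited from the cycle, contradicting strong connectivity, so every vertex lies on one and the same cycle, of length $L=|C|$. In particular each vertex of $C$ also has in-degree one, so there is, up to shift, a unique bi-infinite path in $C$; hence $\Omega_C$ is the single periodic orbit of the discrete shift consisting of the $L$ cyclic shifts of this $L$-periodic sequence. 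Transporting through the correspondence $\sigma$ of Proposition~\ref{isoiso} and closing up under all real-time translations as in Definition~\ref{lifts}, $\Delta_C$ becomes the single $\psi$-orbit, of period $Lh$, of the associated piecewise-constant function; that is, the lift is a single periodic orbit.

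The substantive work is the bookkeeping in the first paragraph: once one accepts that $\Delta_C$ with the restricted shift is literally the space and flow associated to the graph $C$, both parts follow with essentially no extra argument, since $C$ either does or does not have an internal vertex of out-degree $\geq 2$ and the two cases are handled by Theorem~\ref{chaos} and by the functional-graph argument respectively. The one point needing care is the reading of ``out-degree'' in the statement: it must mean out-degree counted within $C$, as edges of $G$ leaving $C$ do not appear in any function of $\Delta_C$; with that convention the stated dichotomy is exhaustive and the corollary is complete.
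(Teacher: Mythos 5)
Your proposal is correct and follows the route the paper intends: the corollary is stated without proof as an immediate consequence of Theorem~\ref{chaos}, and your reduction — identifying $(\Delta_C,\psi|_{\Delta_C})$ with the system built from the induced subgraph on $C$ so that Theorem~\ref{chaos} (equivalently Lemmas~\ref{omegalimit} and~\ref{sensdep}) applies, and observing that out-degree one everywhere forces $C$ to be a single cycle whose lift is one periodic orbit of period $|C|h$ — is exactly the argument being elided. Your remark that ``out-degree'' must be read internally to $C$ is a worthwhile clarification the paper leaves implicit.
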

Finally, with confirmation that behavior in $\Delta$ mirrors that of $\Omega$ as explained in Chapter 3, we move on to considering the hybrid system defined on $M\times\Delta$.

\section{Deterministic Hybrid Systems}
Now that the behavior on $\Delta$ has been determined given a natural number $n$ and an $N$-graph on $n$ vertices, we consider the action of a function $f\in\Delta$ on a set of $n$ dynamical systems.

Consider an $N$-graph $G$ with $n$ vertices.  Take a collection of $n$ dynamical systems $\{\phi_1,...,\phi_n\}$ on a compact space $M \subset \mathbb{R}^d$, where each vertex of $G$ corresponds to one dynamical system $\phi_i$.  Take $f \in \Delta$.  Define $\varphi(t,x,f):\mathbb{R} \times M \times \Delta \rightarrow M$ by
$$
\varphi (t,x,f) = \varphi_t(x,f)= \phi_{i_m}(\tau,(\phi_{i_{m-1}}(h,...\phi_{i_{1}}(h,x))) 
$$
where $\tau\equiv t \mod h$, and $f(s)=i_k$ for $s\in [(k-1)h,kh)$. 

Note also that use of $\phi_i^{-1}$ defines this system backwards in time as well, and that $\phi$, as a composition of continuous functions, remains continuous with respect to $x$. 

Thus, $\varphi(t,x,f)$ is given by the flow along the dynamical system $\phi_i$ during the period of time for which $f = i$.  With this, we can explicitly define our deterministic hybrid system.  Consider 

\begin{equation}\label{dynsys}
\Phi_t(x_0,f_0) \equiv \left (
\begin{array}{cc}
\varphi_t(x_0,f_0) \\
\psi_t(f_0)
\end{array} \right ) 
\end{equation}

with initial conditions $f_0 \in \Delta$, and $x_0\in M$.  Let $\psi_t(f_0) = f_t$ and notice that

\begin{center}
$\Phi_0(x_0,f_0) = \left (
\begin{array}{cc}
x_0 \\
f_0
\end{array} \right ) $
\end{center}

Then,
$$\Phi_{t+s}(x_0,f_0) 
= 
\left (
\begin{array}{cc}
\varphi_{t+s} (x_0, f_0) \\
f_{t+s}
\end{array} \right ) 
 = 
\left (
\begin{array}{cc}
\varphi_t (\varphi_s(x_0, f_0), f_s) \\
f_t \circ f_s
\end{array} \right ) 
= 
\Phi_t \circ \Phi_s(x_0,f_0) .$$
Thus, $\Phi_t$ is in fact a flow, so the deterministic hybrid system is a dynamical system.\\
\indent This explanation is rather unintuitive and bulky.  It is easier to consider a less rigorous definition of the system.  Consider a point $(x,f)\in M\times\Delta.$  We first consider the dynamical system that is given to us by $f(0)$.  As time moves forward, the orbit of $x$ is given by that dictated by the dynamical system corresponding to $\psi(f,t)(0)$.  Recall that $\psi$ simply shifts the function $f$ to the right.  $f$ is piecewise constant; at time $h$, we may have that for all $\varepsilon>0,$ $\psi(f,h)(0)\not = \psi(f,h-\varepsilon)(0).$  If the function changes values, we then instantaneous switch which dynamical system is dictating the orbit of $x$.  This could then result in a non-smooth orbit.  This continues, with a possible change in dynamical system on $M$ occurring after a time interval of length $h$. This can be further understood via the following commuting diagram (where $\pi_1$ and $\pi_2$ are the usual projection mappings):
\begin{center}
\begin{tikzcd}
\mathbb{R}\times M\times\Delta \arrow[r,"\Phi"] \arrow[d,"\pi_1"]
	& M\times\Delta \arrow[d,"\pi_2"]\\
\mathbb{R}\times\Delta \arrow[r,"\varphi"]
	&\Delta
\end{tikzcd}
\end{center}

\indent For the set $M\times\Delta$, we use the metric induced by the \textbf{$L_1$} norm; that is, the metric in $M\times \Delta$ is given by the sum of the metrics used in $M$ (usually given by the standard Euclidean norm in $\mathbb{R}^n$, and the metric used in $\Delta$.)  Note then that by Tychonoff's  Theorem that $M\times \Delta$ is compact, and that $\Phi$ is continuous (as it is continuous in each of its components separately).  \\
\indent It is helpful to have a terminology that explains the behavior in $M\times\Delta$, in that what happens on $\Delta$ is independent, but that the flow on $M$ is dependent on what happens in $\Delta$.  

\begin{defn} A flow $\pi$ on a product space $X\times Y$ is said to be a skew-product flow if there exist continuous mappings $\phi:X\times Y\times T\rightarrow X$ and $\sigma:(Y\times T\rightarrow Y)$ such that 
$$\pi(x,y,t)=(\psi(x,y,t),\sigma(y,t))$$
where $\sigma$ itself is a flow on $Y$ and $T$ is a group that represents time values (for our purposes, we shall take $T=\mathbb{R}$.) \cite{skewproduct}
\end{defn} 

It should not be too hard to see that our system is a skew-product flow, where $X$ as in the definition above corresponds to our space $M$, and $Y$ in the definition above corresponds to $\Delta$.  \\
\indent Now that we have an understanding of the behavior of our system $(M\times\Delta, \Phi)$, we turn to examining some recurrence concepts.  We begin by adapting our understanding of $(\varepsilon,T)$-chains to fit this new situation.

\begin{defn} \label{chain set} A set $E\subset M$ is called a chain set of a system if (i) for all $x\in E$ there exists $f\in\Delta$ such that $\varphi(t,x,f)\in E$ for all $t\in\mathbb{R}$, (ii) for all $x,y\in E$ and for all $\varepsilon,T>0$ there exist $n\in\mathbb{N}$, $x_0,\ldots,x_n\in M$, $f_0,\ldots,f_{n-1}\in\Delta$ and $t_0,\ldots,t_{n-1}\geq T$ with $x_0=x$, $x_n=y$, and 
$$d(\varphi(t_j,x_j,f_j),x_{j+1})\leq\varepsilon\mbox{ for all }j=0,\ldots,n-1.$$
Such a sequence is called an $(\varepsilon,T)$-chain from $x$ to $y$. 
\end{defn}

If there exists a $(\varepsilon,T)$-chain from $x$ to $y$ and from $y$ to $x$, we say that $x$ and $y$ are chain equivalent.\\
\indent It is important to note that chain sets as defined are distinct from the usual chain recurrent components seen in dynamical systems.  This is because the behavior on $M$ is not, in and of itself, a dynamical system, and thus the concept of chain recurrence is not applicable here.  However, in Lemmas \ref{chaindisjoint}, \ref{compact},  and \ref{chainconnected}, we do demonstrate that chain sets, when considered as maximal components, exhibit nice properties that we would expect of recurrent sets.

\begin{lem}\label{chaindisjoint}
Chain sets are pairwise disjoint.
\end{lem}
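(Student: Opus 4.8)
The plan is to argue by contradiction and reduce the statement to the maximality that is built into the notion of a chain set (the remark following Definition \ref{chain set} makes explicit that in this lemma chain sets are to be regarded as maximal components with respect to properties (i)--(ii)). So suppose $E_1$ and $E_2$ are chain sets with $E_1\cap E_2\neq\emptyset$; I would show that the union $E:=E_1\cup E_2$ is again a chain set, so that $E\supseteq E_1$ and $E\supseteq E_2$ forces, by maximality, $E_1=E=E_2$, i.e. distinct chain sets are disjoint.

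The verification that $E$ is a chain set splits into the two defining conditions. For condition (i): given $x\in E$, say $x\in E_1$, property (i) for $E_1$ supplies $f\in\Delta$ with $\varphi(t,x,f)\in E_1\subseteq E$ for all $t\in\mathbb{R}$, and the case $x\in E_2$ is symmetric. For condition (ii): fix $z\in E_1\cap E_2$ once and for all, take $x,y\in E$ and $\varepsilon,T>0$. If $x,y$ lie in the same $E_i$, an $(\varepsilon,T)$-chain from $x$ to $y$ exists directly by (ii) for $E_i$. If $x\in E_1$ and $y\in E_2$ (the other mixed case being symmetric), use (ii) for $E_1$ to get an $(\varepsilon,T)$-chain from $x$ to $z$ and (ii) for $E_2$ to get an $(\varepsilon,T)$-chain from $z$ to $y$; concatenating them --- the terminal point of the first is the initial point of the second, namely $z$, all jump times stay $\geq T$ and all jump errors $\leq\varepsilon$ --- yields an $(\varepsilon,T)$-chain from $x$ to $y$ in $M$. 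Hence $E$ satisfies (i) and (ii).

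The argument is mostly bookkeeping, and I do not expect a serious obstacle; the two things that actually require care are: first, the appeal to maximality must be made explicit, since without it the statement is simply false (any singleton $\{x\}$ carrying a constant orbit for some $f$ is a chain set that sits inside larger ones), so the proof should flag precisely the step where maximality is invoked; and second, one should note that the intermediate points $x_0,\dots,x_n$ of an $(\varepsilon,T)$-chain are only required to lie in $M$, not in the chain set, which is exactly what makes the concatenation through $z$ legitimate --- no point of the combined chain other than its prescribed endpoints need belong to $E_1$ or $E_2$.
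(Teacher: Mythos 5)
Your proposal is correct and follows essentially the same route as the paper: pick a point $z$ in the intersection and concatenate an $(\varepsilon,T)$-chain into $z$ with one out of $z$, then invoke maximality to conclude $E_1=E_2$. You are in fact slightly more careful than the paper's own proof, since you also verify condition (i) for the union and explicitly flag where maximality is used.
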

\begin{proof}
Let $E_1$ and $E_2$ be two chain sets, and let $x\in E_1\cap E_2$ (that is, $E_1$ and $E_2$ are not disjoint).  Then let $y\in E_1$ and $z\in E_2$.  Then, given $\varepsilon, T>0$, by definition of a chain set there exist $n\in\mathbb{N}$, $x_0,\ldots,x_n\in M$, $f_0,\ldots,f_{n-1}\in\Delta$ and $t_0,\ldots,t_{n-1}\geq T$ with $x_0=y$, $x_n=x$, and 
$$d(\varphi(t_j,x_j,f_j),x_{j+1})\leq\varepsilon\mbox{ for all }j=0,\ldots,n-1.$$  Similarly, for all $\varepsilon, T>0$ there exist $n\in\mathbb{N}$, $x_0,\ldots,x_n\in M$, $f_0,\ldots,f_{n-1}\in\Delta$ and $t_0,\ldots,t_{n-1}\geq T$ with $x_0=x$, $x_n=z$, and 
$$d(\varphi(t_j,x_j,f_j),x_{j+1})\leq\varepsilon\mbox{ for all }j=0,\ldots,n-1.$$
Thus, the concatenation of these two $\varepsilon, T$-chains results in a $\varepsilon, T$-chain from $y$ to $z$, and thus $y$ and $z$ are in the same chain set, and thus, since the choice of $y$ and $z$ was arbitrary, $E_1=E_2$.  
\end{proof}

\begin{lem}  \label{compact}
Chain sets are compact.
\end{lem}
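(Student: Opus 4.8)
The plan is to obtain compactness from closedness: since $M$ is compact, it suffices to prove that a chain set $E$ is closed, and I will do this by showing that the closure $\overline{E}$ is again a chain set. Recall that we treat chain sets as maximal components (see the remark before Lemma~\ref{chaindisjoint}); hence, once $\overline{E}$ is known to be a chain set containing $E$, maximality forces $\overline{E}=E$, so $E$ is closed and therefore compact. Everything thus reduces to verifying the two defining conditions (i) and (ii) of a chain set for $\overline{E}$.

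Condition (i) is a routine compactness argument. Given $x\in\overline{E}$, pick $x_k\in E$ with $x_k\to x$; by condition (i) for $E$ there are $f_k\in\Delta$ with $\varphi(t,x_k,f_k)\in E$ for all $t\in\mathbb{R}$. Since $\Delta$ is compact (Lemma~\ref{deltabarcompact}), a subsequence of $\{f_k\}$ converges to some $f\in\Delta$, and by continuity of $\varphi$ we get $\varphi(t,x_k,f_k)\to\varphi(t,x,f)$ for each fixed $t$. As each $\varphi(t,x_k,f_k)$ lies in $E\subseteq\overline{E}$ and $\overline{E}$ is closed, $\varphi(t,x,f)\in\overline{E}$ for all $t$, which is condition (i) for $\overline{E}$.

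Condition (ii) is the crux, and the obstacle is an asymmetry in the notion of an $(\varepsilon,T)$-chain: the \emph{terminal} point is essentially free, since it need only lie within $\varepsilon$ of a flowed point, but the \emph{first} leg must genuinely flow out of the prescribed starting point, so one cannot simply graft a nearby point of $E$ onto the front of an existing chain. I will circumvent this using condition (i) for $\overline{E}$, which has just been proved. Fix $x,y\in\overline{E}$ and $\varepsilon,T>0$. Using density of $E$ in $\overline{E}$, choose $\eta\in E$ with $d(\eta,y)<\varepsilon/2$, and choose $f\in\Delta$ with $\varphi(t,x,f)\in\overline{E}$ for all $t$. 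Put $p=\varphi(T,x,f)\in\overline{E}$ and, by density again, pick $x_1\in E$ with $d(p,x_1)<\varepsilon/2$; then flowing from $x$ for time $T$ under $f$ and jumping by less than $\varepsilon/2$ to $x_1$ is an admissible first leg. Since $x_1,\eta\in E$, condition (ii) for $E$ supplies an $(\varepsilon/2,T)$-chain from $x_1$ to $\eta$; appending it to this first leg and then replacing its terminal point $\eta$ by $y$ produces a finite sequence $x=x_0,x_1,\dots,y$ all of whose elapsed times are $\ge T$ and all of whose jumps are $\le\varepsilon$ (the first jump is $<\varepsilon/2$, the interior jumps are $\le\varepsilon/2$, and the last is $\le\varepsilon/2+d(\eta,y)<\varepsilon$). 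This is an $(\varepsilon,T)$-chain from $x$ to $y$, so $\overline{E}$ satisfies (ii). Hence $\overline{E}$ is a chain set, and by maximality $\overline{E}=E$, which completes the proof. I expect this "launching" step in (ii) to be the only genuine difficulty; the rest is compactness, continuity, and $\varepsilon/2$ bookkeeping.
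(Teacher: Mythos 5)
Your proof is correct, and it reaches the same endpoint as the paper's (a chain set is a closed subset of the compact space $M$, hence compact), but the route through condition (ii) is genuinely more careful than what the paper does. The paper takes a limit point $x$ of $E$, builds a chain from $y\in E$ to $x$ by appending $x$ as a final jump target, and then disposes of the reverse direction with the phrase ``and similarly from $x$ to $y$'' --- but that direction is exactly the asymmetry you identify: a chain \emph{from} $x$ must genuinely flow out of $x$ on its first leg, and one cannot just borrow a chain starting at a nearby $x_N\in E$, since the perturbation $d(\varphi(t_0,x,f_0),\varphi(t_0,x_N,f_0))$ depends on $t_0$ and $f_0$, which in turn depend on the choice of $x_N$. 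Your device --- first establishing condition (i) for $\overline{E}$ via compactness of $\Delta$ and continuity of $\varphi$, then using an actual trajectory $\varphi(T,x,f)\in\overline{E}$ as the launching leg before grafting on an $(\varepsilon/2,T)$-chain inside $E$ --- supplies the missing argument. The only caveat is that your condition (i) step leans on joint continuity of $\varphi$ in $(x,f)$, which the paper asserts (with a weak justification) but which you are entitled to use within its framework. In short: same strategy, but your version closes a real gap in the paper's ``similarly.''
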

\begin{proof}
Let $E$ be a chain set, and let $x\in M$ be a limit point of $E$.  Then there exists a sequence in $E$, $\{x_i\}_{i=1}^\infty$ such that $x_i\rightarrow x$.  Let $y\in E$, and let $\varepsilon, T>0$ be given.  Then there exists $N\in\mathbb{N}$ such that $d(x_N,x)<\varepsilon$.  By definition of a chain set, there exist $n\in\mathbb{N}$, $x_0,\ldots,x_n\in M$, $f_0,\ldots,f_{n}\in\Delta$ and $t_0,\ldots,t_{n}\geq T$ with $x_0=y$, $\varphi(t_n,x_n,f_n)=x_N$, and 
$$d(\varphi(t_j,x_j,f_j),x_{j+1})\leq\varepsilon\mbox{ for all }j=0,\ldots,n-1.$$  
Thus, since $d(\varphi(t_n,x_n,f_n),x)<\varepsilon$, setting $x_{n+1}=x$ gives an $\varepsilon, T$-chain from $y$ to $x$, and similarly from $x$ to $y$, and thus $x\in E$, and thus $E$ is closed.  Since $E$ is then a closed subset of the compact set $M$, $E$ is thus compact. 
\end{proof}

\begin{lem}\label{chainconnected}
Chain sets are connected.
\end{lem}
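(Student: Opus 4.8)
The plan is to argue by contradiction, using the fact that the chain sets under consideration are \emph{maximal} (as the surrounding discussion of ``maximal components'' indicates; without maximality a disconnected set satisfying (i) and (ii) is easy to exhibit). So suppose $E$ is a maximal chain set that is not connected. By Lemma~\ref{compact}, $E$ is compact, so we may write $E = A \cup B$ with $A$ and $B$ nonempty, disjoint, and closed, hence compact, and with $3\delta := d(A,B) > 0$. Fix $a \in A$ and $b \in B$. The aim is to produce a chain set strictly containing $E$, contradicting maximality; the natural candidate is the chain class $\widetilde E := \{\, z \in M : z \text{ is chain equivalent to } a \,\}$, and the work is to find a point of $\widetilde E$ lying outside $E$ and to verify that $\widetilde E$ is itself a chain set.

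First I would mine the $(\varepsilon,T)$-chains supplied by condition (ii) for points lying well away from $E$. Given $\varepsilon < \delta$ and $T > 0$, fix an $(\varepsilon,T)$-chain $a = x_0,\dots,x_n = b$ with $d(\varphi(t_j,x_j,f_j),x_{j+1}) \le \varepsilon$ and $t_j \ge T$. Each map $t \mapsto \varphi(t,x_j,f_j)$ is continuous on $[0,t_j]$, being a composition of the continuous flows $\phi_i$, so along the concatenated ``broken trajectory'' the quantity $d(\cdot,A)$ varies continuously on each piece, jumps by at most $\varepsilon < \delta$ at each splice, starts at $d(a,A) = 0$, and ends at $d(b,A) \ge 3\delta$. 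A jump of size less than $\delta$ cannot cross the gap $\{\, z \in M : \delta \le d(z,A) \le 2\delta \,\}$, which has width $\delta$; hence the intermediate value theorem, applied on the trajectory pieces, produces a point $p = \varphi(\tau,x_j,f_j)$ with $d(p,A) \in [\delta,2\delta]$, so that $d(p,B) \ge \delta$ and $d(p,E) \ge \delta$. Truncating the chain at this piece (splicing the intervening time into the preceding step, with an error controlled by continuity of $\varphi$ in its state variable so that all times stay $\ge T$) shows $a$ reaches $p$ and $p$ reaches $b$ by chains; since $b$ reaches $a$ by (ii), $p$ is chain equivalent to $a$ up to error $\varepsilon$. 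Letting $\varepsilon_k \to 0$ and $T_k \to \infty$, applying the above, and passing to a subsequence (using compactness of $M$, and of $\Delta$ to control the functions and re-anchored trajectory pieces that appear) gives a limit point $p^\star$ with $d(p^\star,E) \ge \delta$, hence $p^\star \notin E$; a limiting argument, using continuity of $\varphi$ in its state and (piecewise-constant) control arguments together with the fact that the trajectory pieces through the approximating points have length at least $T_k \to \infty$, then upgrades ``chain equivalent up to $\varepsilon_k$'' to genuine chain equivalence and yields a full $\varphi$-trajectory through $p^\star$. Thus $p^\star \in \widetilde E \setminus E$; since $\widetilde E$ satisfies (ii) by construction and transitivity of chain equivalence, it remains only to check condition (i) for $\widetilde E$, after which $\widetilde E \supsetneq E$ contradicts maximality.

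The main obstacle is precisely this last point: every $z \in \widetilde E$ must be shown to lie on a full $\varphi$-trajectory contained in $\widetilde E$. This is the analogue, in the present setting where $M$ alone carries no flow, of the classical fact that the chain transitive components of a genuine flow are ``trajectory-filled''; I would establish it by fixing $z$, taking $(\varepsilon_k,T_k)$-chains from $z$ to $z$ with $\varepsilon_k \to 0$ and $T_k \to \infty$, and extracting from the associated long trajectory pieces --- via compactness of $M \times \Delta$ and continuity of the skew-product flow $\Phi$ --- a bi-infinite $\varphi$-trajectory through $z$ all of whose points remain chain equivalent to $a$. The remaining technical points, all routine, are the ``splice-and-correct'' device that keeps minimum times at least $T$ when a chain is cut at an interior point of a trajectory piece, and the bookkeeping that makes the two limiting extractions precise; the intermediate value step, the compactness and closedness of $A$ and $B$, and the transitivity of chain equivalence need no new ideas.
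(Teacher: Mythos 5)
Your route is genuinely different from the paper's. The paper argues directly: it supposes $E\subset A\cup B$ with $A,B$ open and disjoint, and splits on whether $\inf\{d(a,b):a\in A,\ b\in B\}$ is positive (in which case it asserts that an $(\varepsilon,T)$-chain with $\varepsilon$ below this infimum cannot pass from $A$ to $B$) or zero (in which case it derives a contradiction with openness of $A$). You instead invoke maximality, separate $E$ into compact pieces at positive distance, locate via the intermediate value theorem a point $p$ on a broken trajectory with $d(p,E)\geq\delta$, pass to a limit, and aim to contradict maximality by showing the chain class of $a$ is a strictly larger chain set. Your identification of maximality as essential is correct (a pair of chain-equivalent fixed points is a disconnected set satisfying conditions (i) and (ii)), and the intermediate value step itself is sound: $d(\cdot,A)$ is continuous along each trajectory piece, jumps by at most $\varepsilon<\delta$ at splices, and must therefore take a value in the band $[\delta,2\delta]$ on some piece.

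However, there are genuine gaps beyond bookkeeping. First, the steps you label routine --- splicing a chain at an interior point of a trajectory piece while keeping all times at least $T$, re-anchoring a chain from $z$ to $a$ at a nearby point or at a later point of a trajectory, and upgrading approximate chain equivalence to exact chain equivalence in the limit --- all require concatenating or shifting the controls $f_j\in\Delta$, and a concatenation of two elements of $\Delta$ at an arbitrary time need not lie in $\Delta$: the transition must be an edge of $G$ and the pieces must respect the $h$-grid. This is precisely the obstruction that forces the paper to restrict Lemma \ref{complete graph} to complete graphs, and its two-vertex-cycle example shows that $(\varepsilon,h)$-chain sets and $(\varepsilon,T)$-chain sets can genuinely differ, so the time-normalization of Theorem \ref{h flow} is not available in general. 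Second, the error in your ``splice-and-correct'' device is governed by the modulus of continuity of $x\mapsto\varphi(\tau,x,f)$, which depends on $\tau\leq t_j$; since $t_j$ is unbounded and unknown at the moment $\varepsilon$ is fixed, the propagated error is not controlled as written. Third, and most seriously, condition (i) for $\widetilde E$ is the crux and is only sketched: extracting a bi-infinite trajectory through $z$ is not enough, since you must also show every point $w$ on that trajectory is chain equivalent to $a$, and the return chain from $w$ to $a$ again requires the control manipulations above. Until these are resolved (or the setting is restricted, e.g.\ to complete graphs, where they can be), the proposal is a program rather than a proof. For comparison, the paper's argument avoids all of this machinery, though its positive-distance case tacitly assumes that every point of every $(\varepsilon,T)$-chain between points of $E$ stays in $A\cup B$, which Definition \ref{chain set} does not guarantee --- the difficulty your approach is built to confront is a real one.
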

\begin{proof}
Let $E$ be a chain set and let $A,B$ be open sets such that $E\subset A\cup B$ and $A\cap B=\emptyset$.  If $\inf\{d(a,b)|a\in A, b\in B\}>0$, then there exists $\varepsilon<\inf\{d(a,b)|a\in A, b\in B\}$ and thus there exists no $\varepsilon, T-$chain from any $a\in A$ to any $b\in B$, and thus one of $A$ or $B$ must be empty.  If $\inf\{d(a,b)|a\in A, b\in B\}=0$, then there exists some $x$ such that $\inf\{d(a,x)|a\in A\}=0$ and $\inf\{d(b,x)|b\in B\}=0$.  Since by Lemma \ref{compact}, $E$ is closed, this implies that $x\in E$, and thus $x\in A$ or $x\in B$.  Without loss of generality, let $x\in A$.  Since $\inf\{d(b,x)|b\in B\}=0$, this implies that if $B$ is nonempty, for every neighborhood of $N$ of $X$, $N\cap B\not= \emptyset$.  However, since $A$ is open, there is a neighborhood $N$ of $x$ such that $N\subset A$.  This then implies that $A\cap B\not = \emptyset$, a contradiction.  Thus $B$ is empty, and $E$ is connected.

\end{proof}

Note that the above lemma does not show that chain sets are necessarily path connected; indeed, the following is an example of a chain set which is not path connected. 

\begin{ex}
Let $G$, the graph governing $\Delta$, be the complete graph on 2 vertices, and let $M=\{(x,f(x))| x\in(0,1/2\pi], f(x)=\sin(1/x)\}\cup\{0\}\times[-1,1]$, otherwise known as the Topologist's Sine Curve.   Let the two systems defined on $M$ be given by the following differential equations:
\begin{eqnarray}
A: \dot{x}&=&-x(1/2\pi-x)\\
B: \dot{x}&=&x(1/2\pi-x)
\end{eqnarray}
(Note that it is sufficient to describe the dynamics of the systems with just the behavior in the $x$-coordinate alone as, except where $x=0$ - which consists entirely of fixed points - there is exactly one $y$-value for each $x$-value.) Thus, in both systems, the set $\{0\}\times[-1,1]$ is entirely made up of fixed points, and in System A, along the set $\{(x,f(x))| x\in(0,1/2\pi], f(x)=\sin(1/x)\}$ the system moves from right to left (with a fixed point at $x=1/2pi$), the speed converging to zero as x approaches zero.  Similarly, in system $B$, along $\{(x,f(x))| x\in(0,1/2\pi], f(x)=\sin(1/x)\}$, the system moves left to right, with a fixed point at $x=1/2\pi$.  We then claim that the entirety of $M$ is forms one chain set.  
Let $x,y \in M.$ It should be clear that, if $x,y \in \{0\} \times [-1,1],$ or if $x,y \in \{(x,f(x))|x \in (0,1/2\pi],f(x)= \sin(1/x)\},$ then for all $\varepsilon,T > 0 $, there exists an $\varepsilon,T-chain$ from $x$ to $y.$ If $x \in \{0\}\times [-1,1]$ and $y \in \{(x,f(x))|x \in (0,1/2\pi],f(x) = \sin(1/x)\}$, then a chain can be formed by staying at x for a time of at least T, and then jumping by $\varepsilon$ onto a point $z \in \{(x,f(x))|x \in(0, 1/2\pi], f (x) = \sin(1/x)\}$ (since M is the closure of $\{(x, f (x))|x \in (0, 1/2\pi], f (x) = \sin(1/x)\}$, x is a limit point of  $\{(x,f(x))|x \in(0,1/2\pi],f(x) = \sin(1/x)\}$, and thus there exists a point z within $\varepsilon$ of x). Thus, since we have established that there is a chain from z to y, there is a chain from x to y. Similarly, if $x\in \{(x, f (x))|x \in (0, 1/2\pi], f (x) = \sin(1/x)\}$ and $y \in \{0\} \times [-1, 1]$, then, if we let z be a point on $\{(x, f (x))|x \in (0, 1/2\pi], f (x) = \sin(1/x)\}$ within $\varepsilon$ of y, then there exists a chain from x to z, and then jumping to y gives a chain from x to y. Thus, M is a chain set.

It is well known that M, the Topologist's Sine Curve, is a set that is connected but not path connected. A proof can be found in \cite{counterexamples}, pages 137-138.
\end{ex}
\noindent Thus, while chain sets are always connected, they may not exhibit path connectivity.  \\

\indent It is important to remember that chain sets are not the usual chain transitive sets.
This is because we can not consider the behavior on $M$ alone as a flow; it is dependent on orbits in $\Delta$. The following example shows why these concepts are not the same.

\begin{ex}
Consider the system where $\Delta$ is given by the complete graph on two vertices, and $M=[0,2]$.  Let the system corresponding to vertex $A$ be given by the differential equation:
$$\dot{x}=-x(x-1)(x-2),$$
and the system corresponding to vertex $B$ is given by 
$$\dot{x}=-x(x-2).$$
Both of these systems are bounded on either end by fixed points at 0 and 2.  System $A$ has a repelling fixed point at $1$, while in system $B$ on the interval $(0,2)$ the flow moves in the positive direction.  We claim that in this system, the interval $[0,1]$ is a chain set for all $T, \varepsilon>0$.  (The fixed point at $x=2$ is also rather trivially a chain set.) Note that on the open interval $(0,1)$, the flow moves in two different directions in each system.  Note further that the lift of $[0,1]$ is not equal to $\Delta\times [0,1]$, as $[0,1]$ is not invariant.  Then let $y,z\in [0,1)$.  Given $\varepsilon,T>0$, we wish to construct an $(\varepsilon,T)-$chain from $y$ to $z$.  Consider the sequence $a_n=\varphi(-nT,z,B).$ Notice then that 
$$\lim_{n\rightarrow\infty}a_n=0.$$
Let $N$ be such that $|0-a_N|<\varepsilon/2$. Notice that there exists a time $T'>T$ such that $$|0-\varphi(T',y,A)|<\varepsilon/2.$$ Thus, by the triangle inequality, 
$$|\varphi(T',y,A)-a_N|<\varepsilon.$$
Therefor, the sequence $y, \varphi(T',y,A),a_N,z$ forms an $(\varepsilon,T)-$chain from $y$ to $z$.  Since we know chain sets are closed by Lemma \ref{compact}, we know now that $[0,1]$ is a chain set for all $T,\varepsilon>0$. \\
\indent Now, in the above system, let the graph $G$ be the cycle on two vertices.  Then we claim that $[0,1]$ is no longer a chain set for all $T>0$.  Note now that the only functions in $\Delta$ are shifts of the periodic function that switches between $A$ and $B$ on intervals of length $h$.  Without loss of generality, let $z<y$ such that $\varphi(h,\varphi(h,z,A),B)\not=z$.  Such a point exists because for all $\varepsilon>0$, there exists a point $x\in(1/2,1)$ such that $\varphi'(t,x,A)<\varepsilon$ as the flow in system $A$ converges to 1.  However, since $\phi'(t,1,B)\not=0$, the flow is not symmetric, and thus we can not have that $\varphi(h,\varphi(h,z,A),B)=z$ for all $z\in(0,)$, and thus such a point $z$ exists.  We would like to show that there exist $\varepsilon,T$ such that there is no longer an $(\varepsilon,T)$-chain from $z$ to $y$.  Let us start at $z$ with system $A$.  Let $T=2h$, and $x_{2h}=\varphi(h,\varphi(h,z,A),B)$.  If $x_{2h}<z$, then pick $\varepsilon<|z-x_{2h}|$ - notice now that all solutions must be less than $z$ for all times $t>h$.  Similarly, if $x_{2h}>z$, we can choose $\varepsilon$ such that we can not reach any points less than $z$ with a particular $\varepsilon$. If $z=x_{2h}$, let $\varepsilon$ be small enough such that $\varphi(-h,1+\varepsilon,B)<z$. \\
\indent However, note that if we take $T=h$, since we are allowed to switch functions after each $\varepsilon$-jump, we are essentially in the same case as when the graph $G$ is complete (since we may jump by $\varepsilon=0$ and let $f(0)$ take either value $A$ or $B$ after the jump). Thus, in this case $[0,1]$ is an $(\varepsilon,h)$-chain set.  Note that this example then implies that, $(\varepsilon,h)$-chain sets and $(\varepsilon,T)$-chain sets for a general $T$ may not be equivalent, and thus Theorem \ref{h flow} does not apply in this case.
\end{ex}

The above example illustrates that $\varepsilon,h$-chain sets are like having the complete graph (see Lemma \ref{complete graph}); that is, if $\mathcal{E}\subset \Delta\times M$ is a maximal invariant chain transitive set, then $\mathcal{E}=\ell(\pi_M\mathcal{E})$, and $\pi_M\mathcal{E}$ is a chain set for all $T$ (see Theorem \ref{max chain set}).\\
\indent Thus, we see that the relationship between chain sets in $M$ and chain transitive sets in $M\times \Delta$ is rather complicated. As of right now, there is no general theory about the relationship between the two concepts.  Above we have explored certain examples of relationships, but future work may entail coming up with a more general result that relates the two.  In addition, concepts such as Poincar\'e recurrence and nonwandering sets could be explored within this context.  

\indent Since a chain set $E$ is a subset of $M$, it helps to have an extension of it to a set contained in $M\times \Delta$. 

\begin{defn}
Given $E\subset M$, the lift of $E$ to $M\times\Delta$ is given by
$$\ell(E)=\{(x,f)\in M\times\Delta,\Phi(t,x,f)\in E \text{ for all }t\in\mathbb{R}\}.$$
\end{defn}

\begin{defn}
A set $A\subset M$ is said to be invariant if for all $x\in A$, $\varphi(t,x,f)\in A$ for all $t\in\mathbb{R}$ and $f\in\Delta$.
A set $A\subset M$ is said to be forward invariant if for all $x\in A$, $\varphi(t,x,f,)\in A$ for all $t\in \mathcal{R}^+$ and $f\in\Delta$.  Similarly, a set $A\subset M$ is said to be backward invariant if for all $x\in A$, $\varphi(t,x,f,)\in A$ for all $t\in \mathcal{R}^-$ and $f\in\Delta$.
\end{defn}

\begin{remark}
Notice that if $E$ is invariant, $\ell(E)=\Delta\times E$. 
\end{remark}

Since $(M\times\Delta,\Phi)$ is a dynamical system on a compact set, it has chain transitive sets.  We would like to make connections between a chain transitive set contained in $M\times \Delta$ and a chain set that is a subset of $M$.  The following results relate the ideas of chain transitive sets, chain sets, projections, and lifts, and what properties are retained when projecting onto $M$ or lifting to $M\times \Delta$.

\begin{thm}\label{max chain set}
Let $\mathcal{E}\subset M\times\Delta$ be a maximal invariant chain transitive set for the flow.  Then $\pi_M\mathcal{E}$ is a chain set.
\end{thm}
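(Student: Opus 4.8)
The plan is to verify the two defining properties of a chain set (Definition~\ref{chain set}) for $E:=\pi_M\mathcal{E}$ one at a time; the whole argument is really just a matter of unwinding the definitions. First I would record two elementary facts that get used repeatedly: that $\pi_M$ intertwines the flows, $\pi_M\bigl(\Phi(t,x,f)\bigr)=\varphi(t,x,f)$ for all $t\in\mathbb{R}$, and that since $M\times\Delta$ carries the $L_1$-metric $d_{M\times\Delta}\bigl((x,f),(y,g)\bigr)=d_M(x,y)+d_\Delta(f,g)$, one has $d_M(x,y)\le d_{M\times\Delta}\bigl((x,f),(y,g)\bigr)$. I would also note $\mathcal{E}\neq\emptyset$, hence $E\neq\emptyset$. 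Then I verify property (i) using invariance of $\mathcal{E}$, and finally property (ii) using its chain transitivity.

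For property (i): given $x\in E$, choose $f\in\Delta$ with $(x,f)\in\mathcal{E}$. Since $\mathcal{E}$ is invariant under $\Phi$, we get $\Phi(t,x,f)\in\mathcal{E}$ for every $t\in\mathbb{R}$, and applying $\pi_M$ yields $\varphi(t,x,f)\in\pi_M\mathcal{E}=E$ for every $t\in\mathbb{R}$. This exhibits the $f$ required in Definition~\ref{chain set}(i). Only invariance of $\mathcal{E}$ is used here — neither maximality nor chain transitivity.

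For property (ii): fix $x,y\in E$ and $\varepsilon,T>0$, and choose $f,g\in\Delta$ with $(x,f),(y,g)\in\mathcal{E}$. By chain transitivity of $\mathcal{E}$ there is an $(\varepsilon,T)$-chain for $\Phi$ from $(x,f)$ to $(y,g)$: points $(x_0,f_0)=(x,f),\dots,(x_n,f_n)=(y,g)$ and times $t_0,\dots,t_{n-1}\ge T$ with
$$d_{M\times\Delta}\bigl(\Phi(t_j,x_j,f_j),(x_{j+1},f_{j+1})\bigr)\le\varepsilon\qquad\text{for }j=0,\dots,n-1.$$
Projecting onto $M$ and using the two facts above gives $d_M\bigl(\varphi(t_j,x_j,f_j),x_{j+1}\bigr)\le\varepsilon$ for each $j$. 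Hence $x_0,\dots,x_n\in M$, $f_0,\dots,f_{n-1}\in\Delta$ and $t_0,\dots,t_{n-1}\ge T$ form an $(\varepsilon,T)$-chain from $x$ to $y$ exactly in the sense of Definition~\ref{chain set}(ii). As $x,y,\varepsilon,T$ were arbitrary, (ii) holds and $E$ is a chain set. (If one adopts the convention that the chains witnessing chain transitivity lie inside the set, the projected chain in fact lies in $E$, but this is stronger than needed.)

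I do not expect a genuine obstacle here: the proof is bookkeeping. The one point requiring care is the use of the product metric — it is precisely because $d_M$ is dominated by $d_{M\times\Delta}$ that a $\Phi$-chain projects to a $\varphi$-chain with the \emph{same} transition times $t_j$ (so the bound $t_j\ge T$ is preserved) and the \emph{same} tolerance $\varepsilon$. It is also worth flagging that this direction makes no use of maximality of $\mathcal{E}$; maximality is only relevant when one wants $\pi_M\mathcal{E}$ to be maximal among chain sets or wants the converse correspondence, which is treated separately.
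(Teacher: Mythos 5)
Your proof is correct and follows essentially the same route as the paper's: property (i) from invariance of $\mathcal{E}$, property (ii) by projecting the $(\varepsilon,T)$-chains guaranteed by chain transitivity down to $M$, with the observation that the $L_1$ product metric dominates $d_M$ making the projection preserve the tolerance and times. Your write-up is in fact more explicit than the paper's on the metric-domination point, and your remark that maximality is not needed for the two defining properties is consistent with the paper, which only invokes maximality in a closing aside.
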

\begin{proof}
Let $\mathcal{E}$ be an invariant, chain transitive set in $M\times\Delta$.  For $x\in\pi_M\mathcal{E}$ there exists $f\in\Delta$ such that $\varphi(t,x,f)\in\mathcal{E}$ for all $t$ by invariance.  Now let $x,y\in \pi_M\mathcal{E}$ and choose $\varepsilon, T>0$.  Then by chain transitivity of $\mathcal{E}$, we can choose $x_j, f_j, t_j$ such that the corresponding trajectories satisfy the required condition.  The proof is concluded by noticing that $\pi_M\mathcal{E}$ is maximal if $\mathcal{E}$ is maximal.

\end{proof}

\begin{lem}
Given a maximal invariant chain transitive set $\mathcal{E}\subset M\times\Delta$, $\mathcal{E}\subset\ell(\pi_M\mathcal{E})$.
\end{lem}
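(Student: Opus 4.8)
The plan is to derive the inclusion directly from the definition of the lift together with the invariance of $\mathcal{E}$; notably, neither chain transitivity nor maximality is actually needed for this particular statement. First I would unwind what membership in $\ell(\pi_M\mathcal{E})$ requires: a pair $(x,f)\in M\times\Delta$ lies in $\ell(\pi_M\mathcal{E})$ precisely when its $M$-trajectory stays inside $\pi_M\mathcal{E}$ for every $t\in\mathbb{R}$. Here I read the defining condition ``$\Phi(t,x,f)\in E$'' in the definition of $\ell(E)$ in the only way that type-checks, namely that the $M$-component $\varphi(t,x,f)=\pi_M\Phi(t,x,f)$ lies in $E$; this is the point worth a sentence in the write-up, and once it is pinned down the rest is immediate.

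Next I would fix an arbitrary $(x,f)\in\mathcal{E}$. Since $\mathcal{E}$ is invariant, $\Phi(t,x,f)\in\mathcal{E}$ for all $t\in\mathbb{R}$. Applying the projection $\pi_M$ and using $\Phi(t,x,f)=(\varphi(t,x,f),\psi(t,f))$, we get $\varphi(t,x,f)=\pi_M\Phi(t,x,f)\in\pi_M\mathcal{E}$ for every $t\in\mathbb{R}$. By the characterization in the first step, this is exactly the statement that $(x,f)\in\ell(\pi_M\mathcal{E})$. Since $(x,f)\in\mathcal{E}$ was arbitrary, $\mathcal{E}\subseteq\ell(\pi_M\mathcal{E})$, which is the claim.

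There is essentially no obstacle here: the content is purely definitional. The role of the hypotheses ``maximal'' and ``chain transitive'' is only to set up the companion facts in this part of the paper — in particular the reverse inclusion $\ell(\pi_M\mathcal{E})\subseteq\mathcal{E}$, which genuinely uses maximality (a lifted trajectory through $\ell(\pi_M\mathcal{E})$ still chains within $\mathcal{E}$, so maximality forces it into $\mathcal{E}$), together with Theorem \ref{max chain set}, to conclude the equality $\mathcal{E}=\ell(\pi_M\mathcal{E})$. For the present lemma, invariance alone suffices, and the proof is the three lines indicated above.
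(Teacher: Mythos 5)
Your proof is correct and follows essentially the same route as the paper's: fix $(x,f)\in\mathcal{E}$, use invariance to get $\Phi(t,x,f)\in\mathcal{E}$ for all $t$, project to conclude $\varphi(t,x,f)\in\pi_M\mathcal{E}$, and invoke the definition of the lift. Your side remark that the condition ``$\Phi(t,x,f)\in E$'' in the definition of $\ell(E)$ must be read as $\varphi(t,x,f)\in E$ is a fair clarification of a typo in the paper, and the paper's own proof implicitly adopts that reading.
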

\begin{proof}
Let $(x,f)\in\mathcal{E}$.  Then $x\in \pi_M\mathcal{E}$.  Since $\mathcal{E}$ is invariant, $\Phi(t,x,f)\in \mathcal{E}$ for all $t\in\mathbb{R}$.  Thus, $\pi_M\Phi(t,x,f)=\varphi(t,x,f)\in\pi_M\mathcal{E}$ for all $t\in\mathbb{R}$.  This then implies that $(x,f)\in\ell(\pi_M\mathcal{E})$, by definition of the lift. 
\end{proof}

We then wondered if it was possible to establish a more general theory about chain sets and chain transitive sets, and how they are related via lifts and projections.  In order to accomplish this task, we made use of the following theorem, taken from \cite{Alongi}, Theorem 2.7.18.

\begin{thm}\label{h flow}
If $\phi^t$ is a flow on a compact metric space $(X,d)$ and $x,y\in X$, then the following statements are equivalent.
\begin{enumerate}
\item The points x and y are chain equivalent with respect to $\phi^t$.
\item For every $\varepsilon>0$ and $T>0$ there exists an $(\varepsilon,1)$-chain
$$(x_0,\ldots,x_n;t_0,\ldots,t_{n-1})$$
from x to y such that 
$$t_0+\cdots+t_{n-1}\geq T,$$
and there exists an $(\varepsilon,1)$-chain 
$$(y_0,\ldots,y_m;s_0,\ldots,s_{m-1})$$
from y to x such that
$$s_0+\cdots+s_{m-1}\geq T.$$
\item For every $\varepsilon>0$ there exists an $(\varepsilon,1)$-chain from x to y and a $(\varepsilon,1)$-chain from y to x.
\item The points x and y are chain equivalent with respect to $\phi^1$.
\end{enumerate}
\end{thm}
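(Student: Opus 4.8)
The plan is to prove the cycle of implications $(1)\Rightarrow(2)\Rightarrow(3)\Rightarrow(4)\Rightarrow(1)$. Throughout I would use compactness of $X$: for each $R>0$ the map $(s,p)\mapsto\phi^s(p)$ is uniformly continuous on $[-R,R]\times X$, so for every $\eta>0$ there is $\delta>0$ with $d(a,b)<\delta\Rightarrow d(\phi^s a,\phi^s b)<\eta$ for all $|s|\le R$; moreover $\sup_{p\in X}d(\phi^s p,p)\to0$ as $s\to0$. The implication $(1)\Rightarrow(2)$ is immediate: given $\varepsilon,T>0$, apply $(1)$ with $\varepsilon$ and $\max\{T,1\}$ in place of $T$; the resulting $(\varepsilon,\max\{T,1\})$-chain is an $(\varepsilon,1)$-chain, and since it contains at least one jump of length $\ge\max\{T,1\}$ its total time is $\ge T$. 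The implication $(2)\Rightarrow(3)$ simply forgets the total-time condition.

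For $(4)\Rightarrow(1)$, fix $\varepsilon,T$ and put $M=\lceil T\rceil$. Take a $\phi^1$-$\delta$-chain $x=z_0,\dots,z_N=y$ from $(4)$, with $\delta$ to be fixed last, and pad it by appending the $\phi^1$-chain $y\to x\to y$ (also supplied by $(4)$) enough times that $N\ge M$. Then \emph{downsample}: writing $N=qM+r$ with $0\le r<M$, form the flow chain $z_0,z_M,z_{2M},\dots,z_{(q-1)M},z_N$, joining consecutive sampled points by flowing for time $M$, except on the last leg where one flows for time $M+r\in[M,2M)$. Every jump time is $\ge M\ge T$. The jump error on a block of $M$ (or $M+r$) consecutive unit $\phi^1$-steps is controlled by an easy induction using the modulus of continuity of $\phi^1$: it is bounded by a quantity $\rho(\delta)$ with $\rho(\delta)\to0$ as $\delta\to0$, so choosing $\delta$ small forces every error below $\varepsilon$. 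The reverse chain is produced the same way (padding with $x\to y\to x$ instead), which gives $(1)$.

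The substance is in $(3)\Rightarrow(4)$. Fix $\varepsilon>0$. The naive idea — take a flow-$(\delta,1)$-chain and round each jump time to an integer — fails, because advancing the landing point $\phi^{t_j}(x_j)$ by a further time of order $1$ can move it arbitrarily far. So I would first manufacture a flow chain from $x$ to $y$ whose \emph{total time is an integer}. Start from a flow-$(\delta,1)$-chain $C\colon x\to y$ and append to it many copies of the loop $y\to x\to y$ (which $(3)$ provides), obtaining a chain $C'$ with a large number $N'$ of jumps and total time $\Theta'$. Now perturb every jump time of $C'$ \emph{upward} by the common amount $\beta\in[0,\beta_0)$ with $N'\beta=\lceil\Theta'\rceil-\Theta'$; this is possible once $N'>1/\beta_0$, and by the estimate $\sup_p d(\phi^\beta p,p)\to0$ together with uniform continuity each perturbation adds only a small error, so $C'$ stays a flow-$(\varepsilon',1)$-chain (for a parameter $\varepsilon'$ we choose as small as we like in advance) with integer total time $m$. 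View $C'$ as a pseudo-trajectory $\gamma\colon[0,m]\to X$: a concatenation of genuine orbit arcs, consecutive ones joined by a jump of size $<\varepsilon'$, with $\gamma(0)=x$ and $\gamma(m)=y$. Sample at the integers, $w_i=\gamma(i)$. Because each jump time is $\ge1$, no unit interval $(i,i+1]$ contains two jumps; if it contains none then $w_{i+1}=\phi^1(w_i)$ exactly, and if it contains a jump at time $\theta$ then $d(\phi^1(w_i),w_{i+1})$ equals the distance between the time-$(i+1-\theta)$ flow-images, with $i+1-\theta\in[0,1)$, of the pre- and post-jump points, which are within $\varepsilon'$ of each other, hence $<\varepsilon$ once $\varepsilon'$ was chosen small. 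Thus $w_0,\dots,w_m$ is an $\varepsilon$-chain for $\phi^1$ from $x$ to $y$; the same argument on a reverse flow chain yields one from $y$ to $x$, so $(4)$ holds.

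The hardest part is therefore $(3)\Rightarrow(4)$, and within it the real device is the reduction to integer total time: once the pseudo-trajectory has integer length, integer-time sampling lands exactly on $y$, and the hypothesis ``jump times $\ge1$'' is precisely what keeps at most one jump in each unit window, so that uniform continuity can absorb the error. Arranging the integer total time without damaging the chain is what forces the ``lengthen first, then spread a sub-$\beta_0$ slack over many jumps'' maneuver. The error-compounding in $(4)\Rightarrow(1)$ is the only other place needing genuine estimates, and it is a routine induction once the modulus of continuity of $\phi^1$ is named.
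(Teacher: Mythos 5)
The paper does not prove this statement: Theorem~\ref{h flow} is quoted verbatim from the reference \cite{Alongi} (Alongi--Nelson, Theorem 2.7.18), so there is no in-paper argument to compare yours against. Judged on its own, your proof is correct and complete in outline. The cycle $(1)\Rightarrow(2)\Rightarrow(3)$ is handled exactly as it should be; $(4)\Rightarrow(1)$ by padding with $\phi^1$-loops and downsampling in blocks of $M=\lceil T\rceil$ is the standard error-compounding induction and works because $M$ is fixed before $\delta$ is chosen; and the genuinely delicate direction $(3)\Rightarrow(4)$ is handled by a sound device: lengthen the flow chain with loops so that the sub-unit slack $\lceil\Theta'\rceil-\Theta'$ can be distributed as a perturbation $\beta<1/N'$ of each jump time (controlled by $\sup_p d(\phi^\beta p,p)\to 0$), after which integer-time sampling of the pseudo-trajectory lands exactly on $y$, and the hypothesis that every jump time is at least $1$ guarantees at most one jump per unit window so that uniform continuity of $(s,p)\mapsto\phi^s(p)$ on $[0,1]\times X$ absorbs the single $\varepsilon'$-discontinuity. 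Two points you should make explicit in a written version: (i) the inductive bound $\rho_{k+1}(\delta)=\omega(\rho_k(\delta))+\delta$ with $\omega$ a nondecreasing modulus of continuity for $\phi^1$, so that $\rho_{2M}(\delta)\to 0$ as $\delta\to 0$ for fixed $M$; and (ii) the boundary case where a jump of the pseudo-trajectory falls exactly at a sampling integer (including the terminal time $m$), where the convention $\gamma(\theta)=$ post-jump point keeps the estimate $d(\phi^1(w_i),w_{i+1})<\varepsilon$ valid. Neither is a gap, only a detail to record.
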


Notice then, that by this theorem, for chain sets $E$ such that $E=\pi_M(\mathcal{E})$, where $\mathcal{E}$ is the lift of $E$, it is sufficient to take Definition \ref{chain set} consider all chains where all $t_i$'s take the value $h$ (since the number 1 in part 4 of the above theorem is rather arbitrary).  However, this may not be true for all chain sets in general, as there exist chain sets $E$ such that $E\not=\pi_M(\mathcal{E})$.  

\begin{lem}\label{complete graph}
If $G$ is a complete graph, then given a maximal invariant chain transitive set $\mathcal{E}\subset M\times\Delta$, $\mathcal{E}=\ell(\pi_M\mathcal{E})$.
\end{lem}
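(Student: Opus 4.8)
The plan is to prove the reverse inclusion $\ell(\pi_M\mathcal{E})\subseteq\mathcal{E}$, since the inclusion $\mathcal{E}\subseteq\ell(\pi_M\mathcal{E})$ was already established in the preceding lemma. So I would fix $(x,f)\in\ell(\pi_M\mathcal{E})$ and aim to show $(x,f)\in\mathcal{E}$. By maximality of $\mathcal{E}$ among invariant chain transitive sets, it suffices to show that $\mathcal{E}\cup\{(x,f)\}$ — or better, the orbit closure of $(x,f)$ together with $\mathcal{E}$ — is still invariant and chain transitive; since $(x,f)\in\ell(\pi_M\mathcal{E})$ already guarantees that the whole orbit $\{\Phi(t,x,f):t\in\mathbb{R}\}$ projects into $\pi_M\mathcal{E}$, the orbit closure is the natural candidate to adjoin. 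Invariance of that enlarged set is immediate; the real content is chain transitivity, and for that it is enough to produce, for every $\varepsilon,T>0$, an $(\varepsilon,T)$-chain from $(x,f)$ to an arbitrary point of $\mathcal{E}$ and back.

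The key step is the following: given $(x,f)\in\ell(\pi_M\mathcal{E})$ and any $(y,g)\in\mathcal{E}$, I want an $(\varepsilon,T)$-chain in $M\times\Delta$ from $(x,f)$ to $(y,g)$. Here is where completeness of $G$ is used. Because $x\in\pi_M\mathcal{E}$, there is some $f'\in\Delta$ with $(x,f')\in\mathcal{E}$, and by chain transitivity of $\mathcal{E}$ there is an $(\varepsilon,T)$-chain inside $M\times\Delta$ from $(x,f')$ to $(y,g)$. The obstruction is that the chain I actually need must start at $(x,f)$, not at $(x,f')$ — the $\Delta$-components differ. The trick is that an $\varepsilon$-jump is permitted at the very first step: I can flow $(x,f)$ forward for a time $t_0\geq T$ under $\Phi$, landing at $(\varphi(t_0,x,f),\psi(t_0,f))$, and then jump to a point of $\mathcal{E}$. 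Since $\varphi(t_0,x,f)\in\pi_M\mathcal{E}$ (as $(x,f)\in\ell(\pi_M\mathcal{E})$), the first coordinate is already in $\pi_M\mathcal{E}$; I need the jump in the $\Delta$-coordinate to land on an admissible element. Because $G$ is complete, \emph{every} vertex transition is allowed, so for any target value I can prepend a short path and realize any desired $\psi$-translate — concretely, given the tail $g$ of the chain I want to follow inside $\mathcal{E}$, I can choose $t_0$ large and a function in $\Delta$ whose restriction past time $t_0$ agrees with the needed data while its earlier portion is free; completeness is exactly what guarantees such a function lies in $\Delta$. This lets me splice the orbit of $(x,f)$ onto the front of a chain living in $\mathcal{E}$, at the cost of one $\varepsilon$-jump, giving the required chain from $(x,f)$ into $\mathcal{E}$; running the symmetric argument (using $\psi$-translates to land back near $(x,f)$, again legal by completeness) gives the chain back.

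Having produced chains both ways for every $\varepsilon,T$, the orbit closure of $(x,f)$ together with $\mathcal{E}$ is invariant and chain transitive, hence by maximality equals $\mathcal{E}$, so $(x,f)\in\mathcal{E}$. Combined with the earlier inclusion this yields $\mathcal{E}=\ell(\pi_M\mathcal{E})$.

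I expect the main obstacle to be the bookkeeping in the splicing step — making precise that completeness of $G$ lets one prepend an arbitrary finite admissible path (and hence realize, up to an arbitrarily small error absorbed into one $\varepsilon$-jump, any $\Delta$-component one wants at the handoff point), and checking that the resulting concatenated sequence genuinely satisfies Definition \ref{chain set} with all intermediate times $\geq T$. The reference to Theorem \ref{h flow} may also be invoked here to replace the family of $(\varepsilon,T)$-chains by $(\varepsilon,1)$-chains (equivalently $\Phi^1$-chain equivalence), which streamlines the handoff since then only the discrete-time shift structure of $\Delta$ matters. Everything else — invariance of the enlarged set, the appeal to maximality — is routine.
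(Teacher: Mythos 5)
Your proposal is correct and follows essentially the same route as the paper: both reduce the claim to showing $\ell(\pi_M\mathcal{E})$ is chain transitive and then invoke maximality, and both resolve the mismatch in the $\Delta$-components by stitching together admissible control functions so that consecutive ones agree on a long central window (making the $\Delta$-metric jump small), with completeness of $G$ being exactly what guarantees the stitched functions lie in $\Delta$; you also anticipate the paper's use of Theorem \ref{h flow} to normalize the jump times. The one caution is that the ``earlier portion'' of the landing function is not entirely free --- it must be pinned to agree with $\psi(t_0,f)$ on an interval $[-Nh,Nh]$ chosen so the tail of the metric series is below $\varepsilon$, and this pinning must be propagated through every subsequent jump --- which is precisely the bookkeeping the paper carries out with its explicit three-piece definitions of the functions $g_j$.
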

\begin{proof}
It remains to show that $\ell(\pi_M\mathcal{E})\subset\mathcal{E}$; this can be done by showing that $\ell(\pi_M\mathcal{E})$ is chain transitive.  \\
\indent Let $(x,f),(y,g)\in \mathcal{E}$ and pick $\varepsilon, T>0$.  Recall that 

$$d(f,g)=\displaystyle\sum_{i=-\infty}^{\infty}\left(\frac{1}{h}\displaystyle\int_{ih}^{(i+1)h}\delta(f,g,t)dt\right)*4^{-|i|}$$
 where
 $$\delta(f,g,t)=\left\{
     \begin{array}{lr}
       1 &  f(t)\not=g(t)\\
       0 &  f(t)=g(t)
     \end{array}
   \right. .$$

There exists $N\in\mathbb{N}$ such that 
$$2\displaystyle\sum_{i=N}^{\infty}\frac{1}{4^{|i|}}<\varepsilon/2.$$
Chain transitivity of $\mathcal{E}$ means there exists $k\in\mathbb{N}$ and $x_0,\ldots,x_k\in M$, $f_0,\ldots,f_{k-1}\in\overline{\Delta}$, $t_0,\ldots,t_{k-1}>T$ with $x_0=\varphi(2T,x,f)$ and $x_k=\varphi(-T,y,g)$ with 
$$d(\varphi(t_j,x_j,f_j),x_{j+1})<\varepsilon.$$
Without loss of generality, let $t>1$.  Then by Theorem \ref{h flow}, we can set
$$t_0=\cdots=t_{k-1}=h.$$
Define
$$t_{-2}=Nh, \,\,\,x_{-2}=x, \,\,\,g_{-2}=f$$
$$t_{-1}=t,x_{-1}=\varphi(Nh,x,f), g_{-1}= \left\{
     \begin{array}{lr}
       f(t_{-2}+t) &  t\leq t_1\\
       f_0(t-t_{-1}) &  t>t_1
     \end{array}
   \right.$$

Let $t_0,\,\,\,\ldots,\,\,\,t_{k-1}$ and $x_0,\ldots,x_k$ be given as before, and let 
$$t_k=Nh, x_{k+1}=y,g_{k+1}=g.$$
Now, for $j=0,\ldots,k-2$ we define
$$g_j(t)=\left\{
     \begin{array}{lr}
      g_{j-1}(t_{j-1}+t) &  t\leq 0\\
       f_j(t) & 0<t\leq t_j\\
       f_{j+1}(t-t_j) & t>t_j
    
     \end{array}
   \right.$$
   $$g_{k-1}=\left\{
     \begin{array}{lr}
       g_{k-2}(t_{k-2}+t) & t\leq0\\
       f_{k-1}(t)&  0<t\leq t_{k-1}\\
       g(t-t_{k-1}-Nh) & t>t_{k-1}
     \end{array}
   \right.$$
   
   $$g_k=\left\{
     \begin{array}{lr}
       g_{k-1}(t_{k-1}+t) &  t\leq 0\\
       g(t-Nh) &  t>0
     \end{array}
   \right.$$

We then claim that, by construction, all $g_j$'s are elements of $\Delta$.  Recall that by Definition \ref{delta} functions in $\Delta$ require that 
$$\{f(ih)\}_{i\in\mathbb{Z}}\in\Omega$$ for all $f\in\Delta$. Since the graph $G$ is complete, clearly for each $f_i$ the jumps between vertices are allowed by the graph.  The ``stitching" together of pieces of the functions $f_i$'s is also allowed by the graph $G$ associated $\Delta$ because of the completeness of $G$. \\
We further require that the functions $g_i$ be piecewise constant on intervals of length $h$.  By setting $t_j=Nh$ for all $j\in\{-2,-1,\ldots,k-1\}$ this property is satisfied as well.  Thus, $g_j\in\Delta$ for all $j$.
 \\

\indent We further claim that for all $j=-2,-1,\ldots,k$,
$$d(g_j(t_j+\cdot),g_{j+1})<\varepsilon.$$

By choice of $N$, on has that for all $d_1,d_2\in\Delta$

\begin{eqnarray*}
d(d_1,d_2)&=& \displaystyle\sum_{i=-\infty}^\infty \left(\frac{1}{h}\displaystyle\int_{ih}^{(i+1)h}\delta(d_1,d_2,t)dt\right)*4^{-|i|}\\
&\leq& \displaystyle\sum_{i=-N}^N\left[ \left(\frac{1}{h}\displaystyle\int_{ih}^{(i+1)h}\delta(d_1,d_2,t)dt\right)*4^{-|i|}\right] +\varepsilon/2\\
\end{eqnarray*}
Thus it suffices to show that for the considered functions, the integrands vanish.  Notice by definition, for all $i\in\{-2,-1,\ldots,k-1\}$,
$g_i(t+Nh)=g_{i+1}$ for all $-Nh<t<Nh$.  Thus, 
$\delta(g_i(t+Nh),g_{i+1}(t),t)=0$ for all $-Nh<t<Nh$, and therefore,
$$\displaystyle\int_{ih}^{(i+1)h}\delta(f,g,t)dt$$ for all $i\in\{-N,\ldots,N-1\}$.
Thus for all $j=-2,-1,\ldots,k$,
$$d(g_j(t_j+\cdot),d_{j+1})<\varepsilon.$$
\end{proof}

\bibliographystyle{siam}
\bibliography{e,TRecurrence.bib}

\end{document}